\theoremstyle{plain}
\newtheorem{thm}{Theorem}[section]
\newtheorem{lem}[thm]{Lemma}
\newtheorem{lemma}[thm]{Lemma}
\newtheorem{prop-defi}[thm]{Definition \& Proposition}
\newtheorem{prop}[thm]{Proposition}
\newtheorem*{thm*}{Theorem}
\newtheorem*{prop*}{Proposition}
\newtheorem*{cor*}{Corollary}
\theoremstyle{definition}
\newtheorem{defi}[thm]{Definition}
\newtheorem{definition}[thm]{Definition}
\newtheorem{rem}[thm]{Remark}
\newtheorem*{added}{Note added in proof}
\newcounter{claim-counter}
\newtheorem{claim}[claim-counter]{Claim}
\newtheorem*{claim*}{Claim}
\renewcommand{\AA}{{\mathbb A}}
\newcommand{\NN}{{\mathbb N}}
\newcommand{\ZZ}{{\mathbb Z}}
\newcommand{\BB}{{\mathbb B}}
\newcommand{\CC}{{\mathbb C}}
\newcommand{\KK}{{\mathbb K}}
\newcommand{\MM}{{\mathbb M}}
\newcommand{\GG}{{\mathbb G}}
\newcommand{\HH}{{\mathbb H}}
\newcommand{\FF}{{\mathbb F}}
\newcommand{\Z}{{\mathcal Z}}
\newcommand{\B}{{\mathscr B}}
\newcommand{\X}{{\mathcal X}}
\renewcommand{\max}{{\operatorname{max}}}
\newcommand{\varps}{{\varepsilon}}
\newcommand{\tens}{\otimes}
\newcommand{\spann}{{\operatorname{span}}}
\newcommand{\red}{{\operatorname{red}}}
\newcommand{\Tor}{\operatorname{Tor}}
\newcommand{\Hom}{\operatorname{Hom}}
\newcommand{\id}{\operatorname{id}}
\newcommand{\Alg}{{\operatorname{Alg}}}
\newcommand{\Pol}{{\operatorname{Pol}}}
\newcommand{\Ext}{{\operatorname{Ext}}}
\renewcommand{\leq}{\leqslant}
\renewcommand{\geq}{\geqslant}
\newcommand{\Cohom}{\operatorname{H}}
\declaretheorem[style=theorem,name={Theorem}]{theoremletter}
\renewcommand{\L}{\operatorname{\ell}}
\title{On the $\L^2$-Betti numbers of universal quantum groups}
\author{David Kyed}
\address{David Kyed, Department of Mathematics and Computer Science, University of Southern Denmark, Campusvej 55, DK-5230 Odense M, Denmark}
\email{dkyed@imada.sdu.dk}
\author{Sven Raum}
\address{Sven Raum,
EPFL SB SMA,
Station 8,
CH-1015 Lausanne,
Switzerland}
\email{sven.raum@epfl.ch}
\keywords{$\L^2$-Betti numbers, free unitary quantum groups, quantum automorphism groups}
\subjclass[2010]{16T05, 46L52  } 
\thanks{D.K.~gratefully acknowledges the financial support from  the Villum foundation (grant 7423). Sven Raum's research leading to these results has received funding from the People Programme (Marie Curie Actions)
of the European Union's Seventh Framework Programme (FP7/2007-2013) under REA grant agreement n$^\circ$[622322].}
\begin{document}
\begin{abstract} 
We show that the first $\L^2$-Betti number of the duals of the free unitary quantum groups is one,  and that all $\L^2$-Betti numbers vanish for the duals of the quantum automorphism groups of full matrix algebras.

\end{abstract}

\maketitle

\section*{Introduction}
 
A discrete quantum group is the natural replacement for a discrete group in the setting of non-commutative geometry, where one replaces spaces and varieties by suitable algebras or categories of functions and then drops the commutativity assumptions on these.  One approach to discrete quantum groups, formulated in an operator algebraic language by Woronowicz \cite{woronowicz,wor-cp-qgrps},  fits naturally into the more general framework of locally compact quantum groups developed by Kustermans and Vaes \cite{kustermans-vaes-C*-lc, kustermans-vaes, kustermans-universal}.  Thanks to this operator algebraic formulation, numerous  aspects of analytic group theory have been successfully and fruitfully extended to the setting of discrete quantum groups (cf.~\cite{woronowicz, murphy-tuset, brannan-approximation, vergnioux-rapid-decay, fima-prop-T, vaes06, meyer-nest, voigt-bc-for-free-orthogonal}), including  the notion of $\L^2$-Betti numbers, which was introduced for discrete quantum groups in \cite{quantum-betti} and  is the main concern of the present article.  While for ordinary discrete groups, computational results regarding their $\L^2$-Betti numbers are ample, for quantum groups the situation is quite different: beyond the case of amenable discrete quantum groups, for which all $\L^2$-Betti numbers vanish \cite{coamenable-betti}, and the somewhat artificial examples constructed in \cite{kunneth-formula}, the work of  Vergnioux  \cite{vergnioux-paths-in-cayley}  and Collins-H{\"a}rtel-Thom \cite{thom-collins} provides the only computation  of $\L^2$-Betti numbers for genuine quantum examples.  In \cite{vergnioux-paths-in-cayley}, Vergnioux used intricate arguments involving so-called quantum Cayley trees to show that the first $\L^2$-Betti number of the discrete dual $\hat O_n^+$ of the free orthogonal quantum groups $O_n^+$ vanishes.  Later Collins-H{\"a}rtel-Thom \cite{thom-collins} used computations with Gr{\"o}bner bases in order to provide an explicit resolution of the trivial $\hat O_n^+$-module, and combining this with Vergnioux's result they proved the vanishing of all $\L^2$-Betti numbers of $\hat O_n^+$. In \cite{vergnioux-paths-in-cayley}, Vergnioux also proved that the first $\L^2$-Betti number of the discrete dual $\hat U_n^+$ of the free unitary quantum group $U_n^+$ is non-zero, but could not provide a precise calculation.  He conjectured, however, that $\beta_1^{(2)}(\hat U_n^+) = 1$ holds for all $n \geq 2$.  Our main theorem  verifies this conjecture and  thus provides the first computation of a non-zero $\L^2$-Betti number of a genuine quantum group.
\begin{theoremletter}\label{thm-A}
  For all $n\geq 2$ one has $\beta_{1}^{(2)}(\hat{U}_n^+)= 1$. 
\end{theoremletter}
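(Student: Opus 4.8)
The plan is to compute $\beta_1^{(2)}(\hat U_n^+)$ straight from the definition $\beta_k^{(2)}(\hat U_n^+)=\dim_{\M}\Tor_k^{\Pol(U_n^+)}(\CC_\varepsilon,\M)$, where $\M=\L^\infty(U_n^+)$ carries its (tracial, as $U_n^+$ is of Kac type) Haar state and $\CC_\varepsilon$ is the counit; since $\hat U_n^+$ is infinite one has $\beta_0^{(2)}(\hat U_n^+)=0$, which will be used several times. First I would write down the beginning of a free resolution of $\CC_\varepsilon$ over $A:=\Pol(U_n^+)$ coming from the canonical presentation: $A$ is generated by the $2n^2$ elements $u_{ij},u_{ij}^*$ subject to the four families of relations "$u$ unitary" and "$\bar u$ unitary". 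Fox-calculus–type differentials produce an exact sequence $A^{\oplus 4n^2}\xrightarrow{d_2}A^{\oplus 2n^2}\xrightarrow{d_1}A\xrightarrow{\varepsilon}\CC_\varepsilon\to 0$, with $d_1$ sending the generator $e^{(u)}_{ij}$ to $u_{ij}-\delta_{ij}$ and $e^{(u^*)}_{ij}$ to $u_{ij}^*-\delta_{ij}$. Applying $\M\otimes_A-$ and using $\beta_0^{(2)}(\hat U_n^+)=0$ gives $\beta_1^{(2)}(\hat U_n^+)=\dim_{\M}\operatorname{coker}(\id_{\M}\otimes d_2)-1$.

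The heart of the matter is then to compute this cokernel. Using "$uu^*=1$" one eliminates the $n^2$ generators $e^{(u)}_{ij}$ in favour of the $e^{(u^*)}_{ij}$; the relation "$u^*u=1$" becomes redundant, and — exploiting that $\bar u u^t=1$ holds in $A$ — the relation "$\bar u u^t=1$" becomes a consequence of "$u^t\bar u=1$". What survives is a single family of relations, which in matrix notation on $M_n(\M)\cong\M^{n^2}$ is exactly $u^tE-Eu^t=0$. Hence $\operatorname{coker}(\id_{\M}\otimes d_2)$ is identified with $M_n(\M)/\overline{\operatorname{im}\Psi}$ for a left-$\M$-linear map $\Psi\colon M_n(\M)\to M_n(\M)$ of adjoint type, $\Psi(A)_{pq}=\sum_i a_{iq}u_{pi}-\sum_j a_{pj}u_{jq}$, so that $\beta_1^{(2)}(\hat U_n^+)=\dim_{\M}\ker\overline\Psi-1$. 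A direct check with the defining relations shows that the identity matrix and the (matrix) adjoint $u^*$ of the fundamental corepresentation both lie in $\ker\Psi$, and that they are left-$\M$-independent — this is where $n\geq 2$ enters, via non-singularity of $n-\sum_k u_{kk}^*u_{kk}$ in $\M$. Thus $\dim_{\M}\ker\overline\Psi\geq 2$, which already re-proves Vergnioux's inequality $\beta_1^{(2)}(\hat U_n^+)\geq 1$.

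It remains to establish the matching upper bound $\dim_{\M}\ker\overline\Psi\leq 2$, and this is the step I expect to be the main obstacle. Taking adjoints and relabelling indices turns the kernel condition into the statement that a vector $\vec\eta\in\M^{n^2}$ is fixed by the matrix over $\M$ of the adjoint corepresentation $u\otimes\bar u$; equivalently $\ker\overline\Psi\cong\ker(\Sigma-1)$, with $\Sigma\in M_{n^2}(\M)$ the unitary implementing that corepresentation. Using the fusion rules of $U_n^+$ one decomposes $u\otimes\bar u=\mathbf 1\oplus z_1$ with $z_1$ irreducible of dimension $n^2-1$; the trivial summand contributes exactly $\dim_{\M}=1$, so everything reduces to showing that the summand $\ker(Z_1-1)$ attached to the irreducible $z_1$ has $\dim_{\M}=1$ — i.e.\ that $z_1$ contributes precisely one "$\M$-line" of fixed vectors. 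Carrying this out requires the Peter–Weyl decomposition of $\L^2(U_n^+)$, the behaviour of $\dim_{\M}$ under the induced block decomposition, and the genuinely quantum structure of $U_n^+$ (notably $u\otimes\bar u\not\cong\bar u\otimes u$, the orthogonal case behaving differently precisely because $\bar v\cong v$ there). Granting this count, $\dim_{\M}\ker\overline\Psi=2$ and hence $\beta_1^{(2)}(\hat U_n^+)=2-1=1$.
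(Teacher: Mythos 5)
Your reduction of $\beta_1^{(2)}(\hat U_n^+)$ to a rank computation is essentially sound as far as it goes: the presentation complex $A^{4n^2}\to A^{2n^2}\to A\to\CC\to 0$ is exact in the degrees needed for $\Tor_1$, Lück's additivity gives $\beta_1^{(2)}=\dim\operatorname{coker}(\id\otimes d_2)-1$ (using $\beta_0^{(2)}=0$), and your elimination of the redundant relation families is correct, so one does arrive at $\beta_1^{(2)}(\hat U_n^+)=\dim_{L^\infty(U_n^+)}\ker\overline\Psi-1$ for your explicit map $\Psi$, with $1$ and the adjoint matrix of $u$ lying in the kernel. (A smaller caveat: when you pass to "fixed vectors of $u\otimes\bar u$", the coefficients of the module act on the left while the corepresentation entries come out multiplied in the opposite order, so the unitary you actually get is not literally the corepresentation matrix of $u\otimes\bar u$; this can presumably be repaired, but it is not automatic.)

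The genuine gap is the final step, which you explicitly "grant": that the irreducible summand $z_1$ of $u\otimes\bar u$ contributes exactly one $L^\infty(U_n^+)$-line of fixed vectors, i.e.\ $\dim_{L^\infty(U_n^+)}\ker(Z_1-1)=1$. This is not a consequence of the fusion rules or of Peter--Weyl: for the classical group $U(n)$ one has the same fusion pattern $u\otimes\bar u\cong 1\oplus(\text{irreducible})$, yet the fixed space of $g\otimes\bar g$ inside $L^\infty(U(n))^{n^2}$ is the commutant of a generic unitary and has dimension $n$, not $2$. So the count you need is the trace of the spectral projection at $1$ of the unitary $u\otimes\bar u\in M_{n^2}(L^\infty(U_n^+))$, a genuinely analytic quantity depending on the fine spectral structure of $L^\infty(U_n^+)$; determining it is essentially equivalent to the theorem (given Vergnioux's lower bound) and is exactly the kind of quantum Cayley graph/spectral computation that Vergnioux could not push to a precise value. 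The paper avoids this entirely by a different route: it embeds $\Pol(U_n^+)$ into $\Pol(\HH_n)=\Pol(S^1)\ast\Pol(O_n^+)$, identifies $\beta_1^{(2)}(\hat U_n^+)$ with $\dim_{L^\infty(\HH_n)}\Cohom^1(\Pol(U_n^+),M(\HH_n))$, uses the vanishing results for $\hat\AA_n$ (Theorem B) and $\hat O_n^+$ to represent every class by the restriction of a free product cocycle vanishing on $\Pol(O_n^+)$, and uses (non-)amenability together with invertibility in the algebra of affiliated operators to show this representation is unique; the cohomology is then a single copy of $M(\HH_n)$, of dimension one, with no spectral computation required. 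As it stands, your proposal reproves the lower bound $\beta_1^{(2)}(\hat U_n^+)\geq 1$ but does not establish the upper bound.
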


As for discrete groups, to every discrete quantum group $\hat{\GG}$ one associates a natural von Neumann algebra $L\hat{\GG}$.  The relevance of Theorem \ref{thm-A} also stems from the close connection between the von Neumann algebras $L(\hat U_n^+)$ and the elusive free group factors $L\FF_n$. For $n=2$, Banica showed in \cite{banica-unitary} that $L(\hat U_2^+)$ is isomorphic to the free group factor $L\FF_2$, thereby providing a new  interesting model for the latter, and since then it has been an intriguing question to determine whether $L(\hat U_n^+)$ is a free group factor also for $n \geq 3$.  A large number of results comparing the analytic theory of $\hat U_n^+$ to that of free groups find strong similarities: the discrete  quantum groups $\hat{U}_n^+$ have rapid decay \cite{vergnioux-rapid-decay}, the Haagerup property \cite{brannan-approximation}, the Akemann-Ostrand property \cite{vergnioux-orientation-of-quantum-cayley-trees} and they give rise to  simple non-nuclear (reduced) $C^*$-algebras \cite{banica-unitary} and  full, prime, finite, factorial von Neumann algebras \cite{vaes-van-der-vennet} without Cartan subalgebras.  In stark contrast to this, our Theorem \ref{thm-A} demonstrates a behaviour  of $\hat U_n^+$ different from that of the free groups, in that $\beta_1^{(2)}(\FF_n)=n-1$ depends on the value of $n$ while this is not the case for $\beta_1^{(2)}(\hat{U}_n^+)$. Note that the isomorphism $L(\hat{U}_2^+) \cong L(\FF_2)$ is compatible with the intuition provided by our calculation, but that the independence on $n$ of the value of $\beta_1^{(2)}(\hat{U}_n^+)$ has no concrete bearing on the ability of $L\hat{U}_n^+$ to be a free group factor.\\

The proof of Theorem \ref{thm-A} takes Vergnioux's results in \cite{vergnioux-paths-in-cayley} as a main ingredient, thereby avoiding subtle considerations regarding quantum Cayley graphs. Instead, we carefully study extension properties of $1$-cocycles on quantum groups to find convenient representatives of the 1-cohomology classes of $\hat U_n^+$.  We then involve the duals of the quantum automorphism group $\AA_n$ of the matrix algebra $\MM_n(\CC)$, which appear as quantum subgroups of $\hat U_n^+$.  More precisely,  we need the following vanishing result for the first $\L^2$-Betti number of $\hat \AA_n$.
\begin{theoremletter}
\label{thm-B}
For all $n\geq 2$ and all $p\geq 0$ one has $\beta_p^{(2)}(\hat{\AA}_n)=0$.
\end{theoremletter}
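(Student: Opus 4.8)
The plan is to reduce Theorem~\ref{thm-B} to the vanishing of all $\L^2$-Betti numbers of $\hat O_n^+$ --- established by Vergnioux \cite{vergnioux-paths-in-cayley} for $p=1$ and by Collins-H{\"a}rtel-Thom \cite{thom-collins} in general --- by exhibiting $\hat\AA_n$ as a finite-index quantum subgroup of $\hat O_n^+$ and transporting the vanishing along this inclusion. This is also what one expects from the point of view of monoidal equivalence: $\AA_n$ is monoidally equivalent to $SO_q(3)$ with $q+q^{-1}=n$, hence to a quotient of the coamenable quantum group $SU_q(2)$, whose dual is amenable; but $\AA_n$ is itself not coamenable for $n\geq 3$, and monoidal invariance of $\L^2$-Betti numbers is not available here, so a more hands-on argument is needed.

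The first step is to recall, following Banica, that the adjoint action $X\mapsto uXu^{*}$ of $O_n^+$ on $\MM_n(\CC)$ is trace-preserving and defines a surjective morphism $O_n^+\to\AA_n$ with kernel the central subgroup $\{\pm 1\}\cong\ZZ/2\ZZ$, so that $\AA_n\cong PO_n^+:=O_n^+/\{\pm 1\}$; in algebraic terms, $\Pol(\AA_n)$ is the Hopf $*$-subalgebra of even-degree elements of $\Pol(O_n^+)$, i.e.\ the fixed-point algebra of the grading automorphism $u_{ij}\mapsto -u_{ij}$. The non-formal input here is Banica's computation that the fusion rules of $\AA_n$ are of $SO(3)$-type. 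Dualising the extension $1\to\ZZ/2\ZZ\to O_n^+\to\AA_n\to 1$ realises $\hat\AA_n$ as a normal quantum subgroup of $\hat O_n^+$ with quotient $\ZZ/2\ZZ$; in particular it has index $2$.

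The core is then a multiplicativity formula for $\L^2$-Betti numbers, parallel to the classical statement for discrete groups: if $F\leq\GG$ is a finite central subgroup of order $m$ of a compact quantum group $\GG$ and $\HH:=\GG/F$, then
\[
  \beta_p^{(2)}(\hat\HH)\;=\;m\cdot\beta_p^{(2)}(\hat\GG)\qquad\text{for all }p\geq 0 .
\]
I would deduce this from three facts. (i)~$\Pol(\GG)$ is faithfully flat --- indeed finitely generated projective of rank $m$ --- over the Hopf $*$-subalgebra $\Pol(\HH)\subseteq\Pol(\GG)$; for cosemisimple Hopf algebras this is known, and in the case at hand it is transparent, $\Pol(O_n^+)$ being $\ZZ/2\ZZ$-graded with degree-zero part $\Pol(\AA_n)$. (ii)~Consequently a free resolution of the trivial module $\CC$ (via the counit) over $\Pol(\GG)$ restricts to a projective resolution of $\CC$ over $\Pol(\HH)$, and, since multiplication induces a bimodule isomorphism $\Pol(\GG)\tens_{\Pol(\HH)}L\hat\HH\cong L\hat\GG$, the chain complex computing $\Tor_{\bullet}^{\Pol(\HH)}(\CC,L\hat\HH)$ is identified --- as a complex of $L\hat\HH$-modules --- with the one computing $\Tor_{\bullet}^{\Pol(\GG)}(\CC,L\hat\GG)$. (iii)~The inclusion $L\hat\HH\subseteq L\hat\GG$ is a trace-preserving inclusion of finite von Neumann algebras of index $m$, so that $\dim_{L\hat\HH}(X)=m\cdot\dim_{L\hat\GG}(X)$ for every $L\hat\GG$-module $X$. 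Passing to homology in (ii) and then taking $L\hat\HH$-dimensions, (iii) yields the displayed formula.

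Applying this with $\GG=O_n^+$, $F=\ZZ/2\ZZ$ and $\HH=\AA_n$, and using $\beta_p^{(2)}(\hat O_n^+)=0$, gives $\beta_p^{(2)}(\hat\AA_n)=2\cdot 0=0$ for all $p\geq 0$; when $n=2$ one may alternatively note that $O_2^+$, hence $\AA_2$, is coamenable, so $\hat\AA_2$ is amenable and the conclusion follows from \cite{coamenable-betti}. I expect the main obstacle to be fact (ii): one must check that the homological change of rings survives the passage from the Hopf $*$-algebras to their von Neumann completions, the crucial point being the bimodule isomorphism $\Pol(\GG)\tens_{\Pol(\HH)}L\hat\HH\cong L\hat\GG$ --- equivalently, that $L\hat\GG$ is free of rank $m$ over $L\hat\HH$, which for $m=2$ comes down to producing a unitary in the ``odd part'' of $L\hat O_n^+$ --- together with verifying the flatness asserted in (i).
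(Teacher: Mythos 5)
Your overall strategy is the same as the paper's: transfer the vanishing of all $\L^2$-Betti numbers of $\hat{O}_n^+$ \cite{thom-collins} to $\hat{\AA}_n$ through the realization of $\Pol(\AA_n)$ as the even (fixed-point) subalgebra of $\Pol(O_n^+)$ under $v_{ij}\mapsto -v_{ij}$, i.e.\ through the index-two picture $\AA_n\cong PO_n^+$. The difference lies in the direction of transfer, and this is where your argument has genuine gaps: the multiplicativity lemma you rely on is not proved. Concretely, two statements are asserted rather than established. (a) The bimodule isomorphism $\Pol(O_n^+)\tens_{\Pol(\AA_n)}L^\infty(\AA_n)\cong L^\infty(O_n^+)$ (equivalently on the other side): you flag this yourself, but the route you propose --- freeness of rank two, i.e.\ a unitary in the odd part of $L^\infty(O_n^+)$ --- is both stronger than needed and not obviously available; it would amount to a Goldman-type crossed product decomposition and would require factoriality-type information about $L^\infty(\AA_n)$ that you do not have. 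What actually rescues this step is the strong $\ZZ/2\ZZ$-grading identity $\sum_k v_{k1}v_{k1}=1$: every odd element $x$ satisfies $x=\sum_k (xv_{k1})v_{k1}$ with $xv_{k1}$ fixed by the grading automorphism, which gives surjectivity of the multiplication map, and inserting the same relation gives injectivity; no unitary is needed. The same identity (or Chirvasitu--Schneider \cite{chirvasitu-faithfully-flat,schneider}) is also what justifies your claim (i) that $\Pol(O_n^+)$ is finitely generated projective over $\Pol(\AA_n)$ --- the mere existence of the $\ZZ/2\ZZ$-grading does not. (b) Step (iii), $\dim_{L^\infty(\AA_n)}(X)=2\dim_{L^\infty(O_n^+)}(X)$ for \emph{arbitrary} modules $X$, is the classical statement for group von Neumann algebras, but for a general trace-preserving inclusion of finite von Neumann algebras it is not an off-the-shelf citation: L\"uck's Theorem 6.29 \cite{Luck02} concerns induction, not restriction, and you would additionally have to show $\dim_{L^\infty(\AA_n)}L^\infty(O_n^+)=2$ (for the vanishing statement any finite bound would do, but even that requires an argument with L\"uck's extended dimension).

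For comparison, the paper avoids both issues by transferring in the opposite direction: it writes $\beta_p^{(2)}(\hat{\AA}_n)=\dim_{L^\infty(O_n^+)}\Tor_p^{\Pol(\AA_n)}(L^\infty(O_n^+),\CC)$ using that induction $L^\infty(O_n^+)\tens_{L^\infty(\AA_n)}-$ is exact and dimension-preserving \cite[Theorem 6.29]{Luck02}, then performs a flat base change \cite[Proposition 3.2.9]{weibel} along the faithfully flat inclusion $\Pol(\AA_n)\subset\Pol(O_n^+)$ \cite{chirvasitu-faithfully-flat}, and identifies $\Pol(O_n^+)\tens_{\Pol(\AA_n)}\CC\cong\CC\oplus{_\alpha}\CC$ by an explicit computation with the relations; the twisted summand is handled by a further base change along $\alpha$ together with the fact that twisting by the trace-preserving automorphism $\alpha$ preserves von Neumann dimension. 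Since only vanishing is needed, no index or rank computation enters at all. If you repair (a) via the strong grading and supply a proof of (b) --- or simply switch to the induction direction --- your argument becomes correct and indeed yields the index-two multiplicativity statement of which the paper's proof is implicitly the required special case; your alternative remark that $\hat{\AA}_2$ is amenable, so the $n=2$ case follows from \cite{coamenable-betti} and \cite{banica-sym}, is correct but not needed.
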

 Note that the vanishing of $\beta_p^{(2)}(\hat{\AA}_n)$ for $p\geq 4$ also follows from \cite[Theorem 6.5]{bichon-gerstenhaber},  which shows that the cohomological dimension of $\hat \AA_n$ is equal to 3.\\

In addition to the introduction, the paper consists of two sections and an appendix.  The first of these sections contains  the relevant background material (including the definition of the objects mentioned above) and the second contains the proofs of our two main results.  In the appendix, we give a short proof of a well know ring-theoretical result in  an operator algebraic language.

\section*{Acknowledgement}

The authors thank Julien Bichon for pointing out the reference \cite{brown-goodearl}, which provides a reference for the result shown in the appendix.

\section{Preliminaries}

\subsection{Compact and discrete quantum groups}
The aim of this section is to fix our notation concerning quantum groups, but since this is by now fairly standard, these preliminaries will be kept rather brief. For more exhaustive details, we refer the reader to the original papers by Woronowicz \cite{woronowicz,wor-cp-qgrps} or the introductory texts \cite{tuset, timmermann-book}  as well as references therein.
In Woronowicz' approach to compact quantum groups, such an object --- here denoted  $\GG$ --- consists of a unital $C^*$-algebra $C(\GG)$ together with a $*$-homomorphism $\Delta \colon C(\GG) \to C(\GG)\tens_{\min} C(\GG)$ (the \emph{comultiplication}) satisfying a certain coassociativity- and non-degeneracy-condition.  Associated with this data is a distinguished state $h_\GG$ (the \emph{Haar state}) which plays the role of the the Haar integral of a compact group. The additional requirements on $\Delta$ ensure that if the $C^*$-algebra $C(\GG)$ happens to be commutative, then there exists a genuine compact group $G$ such that $C(\GG)=C(G)$ and $\Delta$ is dual to the multiplication map $G\times G\to G$.  Such a compact quantum group $\GG$  naturally gives rise to a dense Hopf $*$-subalgebra $\Pol(\GG)\subset C(\GG)$  and the representation on the GNS space $L^2(\GG)$ associated with $h_\GG$ restricts to an embedding $\Pol(\GG) \subset \BB(L^2(\GG))$. We shall write $C(\GG)_\red$ and $L^\infty(\GG)$ for the $C^*$-algebra and von Neumann algebra generated by $\Pol(\GG)$ inside $\BB(L^2(\GG))$, respectively. We will only be interested in the situation where $h_{\GG}$ is a trace (in which case $\GG$ is said to be of \emph{Kac type}); then  $L^\infty(\GG)$ is a finite von Neumann algebra and we may therefore consider the associated algebra $M(\GG)$  of closed, densely defined, unbounded operators affiliated with it.  Dual to the notion of a  unitary representation of a group is the notion of a \emph{unitary corepresentation} of a quantum group and if one specifies a finite dimensional such --- which is then a matrix over $\Pol(\GG)$ --- whose matrix coefficients generate $\Pol(\GG)$ then $\GG$ is  said to be a \emph{compact matrix quantum group}; this is the non-commutative analogue of a Lie group with fixed fundamental representation. \\

Associated with a compact quantum group $\GG$ is its so-called discrete dual quantum group $\hat{\GG}$, and it is fruitful to think of the associated algebras as being generalizations of the various group algebras  associated with a discrete group --- thus $\Pol(\GG)$ can be thought of as representing $\CC[\hat{\GG}]$, $C(\GG)_{\red}$ as representing $C^*_\red(\hat{\GG})$ and $L^\infty(\GG)$ as representing $L\hat{\GG}$. As one might expect, any countable discrete group $\Gamma$ does indeed give rise to a compact quantum group in this way --- its $C^*$-algebra being $C^*_{\red}\Gamma$ and the comultiplication being given by $\Delta(\lambda_\gamma)=\lambda_\gamma \tens\lambda_\gamma$.

\subsection{Universal quantum groups}
\label{sec:universal-quantum-groups}

In this section we introduce the quantum groups under consideration in the sequel.  

\begin{definition}[\cite{wang}]
The \emph{free unitary quantum group} $U_n^+$ is defined as the maximal $C^*$-completion of the universal, unital $*$-algebra $\Pol(U_n^+)$ generated by $n^2$ elements $\{u_{ij} \mid i,j=1,\dots, n\}$ subject to the relations making $u:=(u_{ij})_{i,j=1}^n$ and $\bar{u}:=(u_{ij}^*)_{i,j=1}^n$ unitary matrices. The comultiplication and counit are given on the generators by 
\begin{align}\label{delta-and-epsilon}
\Delta(u_{ij})=\sum_{k=1}^n u_{ik}\tens u_{kj} \quad \text{ and } \quad \varps(u_{ij})=\delta_{i,j},
\end{align}
and $u$ is a fundamental unitary corepresentation.
\end{definition}
\begin{definition}[\cite{wang}]
 The \emph{free orthogonal quantum group} $O_n^+$ is defined as the maximal $C^*$-completion of the universal, unital $*$-algebra $\Pol(O_n^+)$ generated by $n^2$ {selfadjoint} elements $\{v_{ij} \mid i,j=1,\dots, n\}$ subject to the relations making $v:=(v_{ij})_{i,j=1}^n$ an orthogonal matrix
The comultiplication and counit are given on the generators by the obvious analogue of \eqref{delta-and-epsilon} and $v$ is a fundamental unitary corepresentation.
\end{definition}
\begin{rem}
The reason for the names is twofold: firstly, if one additionally imposes the relation that the generators commute, then the resulting compact quantum groups identify with the classical orthogonal and unitary groups, respectively, and secondly the passage from classical groups to free quantum groups parallels the passage from classical to free probability in many respects.  For more information about $O_n^+$ and $U_n^+$ and their co-representation theory the reader is referred to \cite{banica-orto, banica-unitary, wang, wang-van-daele}.
\end{rem}

We will furthermore need the \emph{quantum automorphism group} $\AA_n$ of the matrix algebra $\MM_n(\CC)$. This quantum group is defined as the universal object in the category of quantum groups coacting (trace-preservingly) on $\MM_n(\CC)$, and can also be defined abstractly in terms of generators and relations, analogous to the case of $U_n^+$ and  $O_n^+$ above  \cite{wang-quantum-symmetry-groups}. For our purposes,  the following description is the relevant one:
\begin{lem}[{\cite[Corollary 4.1]{banica-sym}}]\label{A_n-description}
$\Pol(\AA_n)$ is isomorphic to the Hopf-subalgebra in $\Pol(O_n^+)$ generated by the elements $\{v_{ij}v_{kl}\mid i,j,k,l=1,\dots, n\}$.
\end{lem}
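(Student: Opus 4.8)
The plan is to realise the Hopf $*$-subalgebra $B := \langle v_{ij}v_{kl} \mid i,j,k,l=1,\dots,n \rangle \subseteq \Pol(O_n^+)$ as the target of a canonical surjection from $\Pol(\AA_n)$, and then to promote that surjection to an isomorphism by comparing intertwiner spaces --- that is, via Woronowicz' Tannaka--Krein duality.

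First I would construct the surjection. Equipping $\MM_n(\CC)$ with its canonical trace $\tau$, one checks that the formula
\[
  \beta\colon \MM_n(\CC) \To \MM_n(\CC)\tens \Pol(O_n^+), \qquad \beta(e_{ij}) = \sum_{k,l} e_{kl}\tens v_{ki}v_{lj}
\]
defines a coassociative, $\tau$-preserving, unital $*$-homomorphism --- it is the adjoint coaction attached to the corepresentation $v\htens v$ on $\MM_n(\CC)\cong\CC^n\tens\overline{\CC^n}$, and the verification uses only the relations $\sum_q v_{qi}v_{qj} = \delta_{ij} = \sum_q v_{iq}v_{jq}$ together with self-adjointness of the $v_{ij}$. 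The matrix coefficients of $\beta$ lie in $B$, which is manifestly invariant under the comultiplication, counit and antipode, so $\beta$ is a $\tau$-preserving coaction of the compact quantum group with polynomial algebra $B$. By the universal property of $\AA_n$ as the quantum automorphism group of $\MM_n(\CC)$, there is then a unique morphism of compact quantum groups $\pi\colon\Pol(\AA_n)\to\Pol(O_n^+)$ carrying the universal coaction $\alpha$ to $\beta$; since the coefficients of $\beta$ are precisely all products $v_{ij}v_{kl}$, the image of $\pi$ is exactly $B$. It then remains to show that $\pi$ is injective.

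The second and main step is injectivity, which I would reduce to a comparison of intertwiner spaces. Let $\rho$ be the corepresentation of $\AA_n$ on $\MM_n(\CC)$ implementing $\alpha$; it generates $\mathrm{Rep}(\AA_n)$, and $(\id\tens\pi)(\rho^{\tens k}) = (v\htens v)^{\tens k}$. Feeding this into the intertwiner relations yields the inclusions
\[
  \Hom_{\AA_n}(\rho^{\tens k},\rho^{\tens l}) \subseteq \Hom_{O_n^+}\bigl((v\htens v)^{\tens k},(v\htens v)^{\tens l}\bigr) \qquad \text{for all } k,l,
\]
and, by the Peter--Weyl decomposition together with linear independence of matrix coefficients of inequivalent irreducibles, $\pi$ is injective as soon as these inclusions are equalities. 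For the reverse inclusions I would invoke Banica's diagrammatic descriptions of both sides: for $O_n^+$ the space $\Hom_{O_n^+}(v^{\tens 2k},v^{\tens 2l})$ is spanned by the Temperley--Lieb (non-crossing pairing) diagrams on $2k+2l$ points \cite{banica-orto}, while the intertwiner category of the coaction $\rho$ is generated by the multiplication and unit of $\MM_n(\CC)$ and their adjoints. One matches these two planar calculi strand by strand --- using the identification $\MM_n(\CC)\cong\CC^n\tens\overline{\CC^n}$, under which the multiplication and unit of $\MM_n(\CC)$ become contractions of $\CC^n$ against itself --- the decisive point being that the dimension parameter of the $\MM_n(\CC)$-calculus agrees with that of $v\htens v$ for $O_n^+$ (both fundamental objects are $n^2$-dimensional, and the fusion rules are those of $SO(3)$). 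This realises every Temperley--Lieb intertwiner of $(v\htens v)^{\tens\bullet}$ by an $\AA_n$-intertwiner of $\rho^{\tens\bullet}$, yielding equality of all the Hom-spaces above and hence the isomorphism $\Pol(\AA_n)\cong B$.

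I expect this diagrammatic comparison to be the main obstacle: one must carefully line up Banica's two presentations of the intertwiner spaces and, above all, verify that the scalar parameters coincide, so that no defining relation is lost on either side. Everything else --- the coaction identities for $\beta$, the extraction of $\pi$ from the universal property, surjectivity onto $B$, and the implication ``equal Hom-spaces $\Rightarrow$ $\pi$ injective'' via Peter--Weyl --- is routine. An essentially equivalent, perhaps more streamlined, formulation phrases the comparison as a monoidal equivalence between $\mathrm{Rep}(\AA_n)$ and the full tensor subcategory of $\mathrm{Rep}(O_n^+)$ generated by $v\htens v$, compatible with the canonical fibre functors, after which Tannaka--Krein duality applies directly.
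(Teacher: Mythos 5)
The paper offers no internal proof of this lemma at all --- it is quoted directly from \cite[Corollary 4.1]{banica-sym} --- and your outline is essentially a reconstruction of Banica's own Tannakian argument: the adjoint coaction of $O_n^+$ on $\MM_n(\CC)$, the surjection $\Pol(\AA_n)\to B$ from the universal property, and injectivity via equality of the intertwiner spaces $\Hom_{\AA_n}(\rho^{\tens k},\rho^{\tens l})$ with the even part of the Temperley--Lieb category of $O_n^+$. The one step you rightly flag as the main obstacle --- that the even Temperley--Lieb calculus at parameter $n$ is generated, under $\MM_n(\CC)\cong\CC^n\tens\overline{\CC^n}$, by the multiplication and unit maps of $\MM_n(\CC)$ and their adjoints, so that the Hom-space inclusions are in fact equalities --- is precisely the nontrivial content of Banica's theorem and is left as a sketch here, but the surrounding reductions (the coaction identities, surjectivity onto $B$, and ``equal Hom-spaces implies injectivity'' via Peter--Weyl/Tannaka--Krein) are all set up correctly.
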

\begin{rem}
Note that the universal $C^*$-completion $C(\AA_n)_\max$ is the algebra commonly denoted $A_{\text{aut}}(\MM_n(\CC))$ in the literature \cite{wang-quantum-symmetry-groups, banica-sym}.\\
\end{rem}

Lastly, we denote by $\HH_n$ the compact quantum group corresponding to the Hopf $*$-algebra $\Pol(S^1)\ast \Pol(O_n^+)$ (see \cite{wang} for the free product construction of quantum groups) and recall \cite{banica-unitary} that $\Alg^*(zv_{ij}\mid i,j=1,\dots, n)\subset \Pol(\HH_n)$ is a Hopf $*$-subalgebra isomorphic to $\Pol(U_n^+)$;  the isomorphism being given by  $u_{ij}\mapsto zv_{ij}$, where $z$ denotes the identity map on the unit circle $S^1$. In what follows, we will always think of $\Pol(U_n^+)$ as  realized inside $\Pol(\HH_n)$ in this way.  Since $u_{ij}=zv_{ij}$ we have $u_{ij}^*u_{kl}=v_{ij}v_{kl}$ and by Lemma \ref{A_n-description}, $\Pol(\AA_n)$ is therefore also a subalgebra of $\Pol(U_n^{+})\subset \Pol(\HH)$.  The preceding discussion may be summarized by means of the following diagram of inclusions.
\begin{equation*}
  \xymatrix{
    \Pol(\AA_n) \ar@{^(->}[r] \ar@{^(->}[d] & \Pol(O_n^+) \ar@{^(->}[d]\\
    \Pol(U_n^+) \ar@{^(->}[r] & \Pol(\HH_n) \text{.}
  }
\end{equation*}

\subsection{$\L^2$-Betti numbers for quantum groups}
\label{sec:ltwo-betti-numbers}

  As indicated in the introduction, numerous notions from the theory discrete groups have been extended to the setting of discrete quantum groups and among these is the theory of $\L^2$-Betti numbers \cite{quantum-betti}. Following L{\"u}ck's approach \cite{Luck02}, the $\L^2$-Betti numbers
of the discrete dual of a compact quantum group $\GG$ of Kac type are defined as
\begin{equation*}
  \beta_p^{(2)}(\hat{\GG})
  :=
  \dim_{L^\infty(\GG)} \Tor_p^{\Pol(\GG)}(L^\infty(\GG),\CC)
  \text{,}
\end{equation*}
where $\dim_{L^\infty(\GG)}$ is L{\"u}ck's extended von Neumann dimension computed with respect to the trace $h_{\GG}$ (cf.~\cite{Luck02}).

\subsection{1-cohomology  for quantum groups}
\label{sec:one-cohomology}

In this section we describe how the first $\L^2$-Betti number of a discrete quantum group of Kac type can be obtained via 1-cohomology.  Let $\GG$ be a compact quantum group and denote by $\varps$ the counit on the associated Hopf $*$-algebra $\Pol(\GG)$.

\begin{defi}
  Let $\X$ be  a complex vector space with a representation of the ring $\Pol(\GG)$.
  \begin{enumerate}
  \item
    A \emph{1-cocycle} into $\X$ is a linear map $c\colon\Pol(\GG)\to \X$ satisfying $c(ab)=a.c(b) +c(a)\varps(b)$. The space of 1-cocycles is denoted $Z^1(\Pol(\GG), \X)$.
  \item
    A cocycle $c\in Z^1(\Pol(\GG), \X)$ is said to be \emph{inner} if there exists $\xi\in \X$ such that $c(a)=a.\xi-\varps(a)\xi$, and the space of inner cocycles is denoted $B^1(\Pol(\GG), \X)$.
  \item
    The \emph{first cohomology} $\Cohom^1(\Pol(\GG),\X)$ of $\Pol(\GG)$  with values in $\X$ is defined   as the space of cocycles modulo the inner ones. In analogy with the group case, we will also refer to this as the first cohomology of $\hat{\GG}$ with coefficients in $\X$.
  \item
    A vector $\xi \in \X$ is said to be \emph{fixed} if $x.\xi=\varps(x)\xi$ for all $x\in \Pol(\GG)$.
  \end{enumerate}
\end{defi}

\begin{lem}
  \label{uniquely-determined-lem}
  If $\GG$ is a compact matrix quantum group with  fundamental unitary corepresentation $u=(u_{ij})_{i,j=1}^n\in \MM_n(\Pol(\GG))$, then any cocycle $c$ into any left $\Pol(\GG)$-module $\X$
  is uniquely determined by its values on either of the sets $\{u_{ij}\mid i,j=1,\dots, n\}$ and $\{u_{ij}^*\mid i,j=1,\dots, n\}$.
\end{lem}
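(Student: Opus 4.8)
The plan is to exploit that, by the very definition of a compact matrix quantum group, $\Pol(\GG)$ is generated as a $*$-algebra by the entries $u_{ij}$, hence as an algebra by $\{u_{ij}\}\cup\{u_{ij}^*\}$, and to combine this with the cocycle identity. First I would record that putting $a=b=1$ in $c(ab)=a.c(b)+c(a)\varps(b)$ forces $c(1)=0$, and that --- since $c$ is linear, satisfies $c(ab)=a.c(b)+c(a)\varps(b)$, and the scalars $\varps(u_{ij})=\delta_{ij}=\varps(u_{ij}^*)$ are known --- an induction on the length of a word in the generators $u_{ij}, u_{ij}^*$ shows that $c$ is completely determined by its values on the set $\{u_{ij}\}\cup\{u_{ij}^*\}$.

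The real content is therefore to recover the values $c(u_{ij}^*)$ from the values $c(u_{kl})$, and symmetrically vice versa. For this I would apply $c$ to the unitarity relation $\sum_{k} u_{ik}u_{jk}^*=\delta_{ij}1$; using the cocycle identity, $\varps(u_{jk}^*)=\delta_{jk}$, and $c(1)=0$, this yields
\[
  \sum_{k} u_{ik}.c(u_{jk}^*) \;=\; -\,c(u_{ij}).
\]
Acting on this identity by $u_{im}^*$, summing over $i$, and using only the module axiom $a.(b.\xi)=(ab).\xi$ together with the further unitarity relation $\sum_i u_{im}^* u_{ik}=\delta_{mk}1$, the left-hand side collapses and one obtains the explicit formula $c(u_{jm}^*)=-\sum_i u_{im}^*.c(u_{ij})$. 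Since the matrix $\bar u=(u_{ij}^*)_{i,j}$ is also unitary, the same computation applied to $\sum_k u_{ik}^*u_{jk}=\delta_{ij}1$ (and then acting by $u_{im}$) produces the dual formula $c(u_{jm})=-\sum_i u_{im}.c(u_{ij}^*)$. Either of these formulas, combined with the reduction of the first paragraph, proves the lemma.

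I do not anticipate a genuine obstacle here; the only point requiring care is the bookkeeping with the several forms of the unitarity relations ($uu^*=u^*u=1$ and their transposes) and the verification that the recovery formulas manipulate the module structure only through associativity --- no use of $\varps$ enters --- so that they indeed remain valid over an arbitrary left $\Pol(\GG)$-module $\X$, as required by the statement.
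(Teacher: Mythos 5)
Your proposal is correct and follows essentially the same route as the paper: use $c(1)=0$ and the fact that the $u_{ij}$, $u_{ij}^*$ generate $\Pol(\GG)$, then apply the cocycle identity to the unitarity relations together with $\varps(u_{ij})=\delta_{ij}=\varps(u_{ij}^*)$ to express the values of $c$ on one set of generators through its values on the other. The only difference is cosmetic: the paper applies $c$ directly to $\sum_k u_{ki}^*u_{kj}=\delta_{ij}1$, where the counit term collapses at once to $c(u_{ji}^*)=-\sum_k u_{ki}^*\,c(u_{kj})$, whereas you reach the same formulas after an extra inversion step using a second unitarity relation.
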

\begin{proof}
  First note that $\Pol(\GG) = \Alg(u_{ij}, u_{ij}^*)$ and due to the cocycle relation, $c$ is therefore uniquely determined by its values on the set $\{u_{ij}, u_{ij}^*\mid 1\leq i,j\leq n\}$.
  Writing the equation $1_n=u^*u=uu^*$ out in terms of matrix entries gives the relations:
  \begin{align}
    \sum_{k=1}^n u_{ki}^*u_{kj}&=\delta_{i,j}1 \label{unitary-eq-1},\\
    \sum_{k=1}^n u_{ik}u_{jk}^*&=\delta_{i,j}1\label{unitary-eq-2},
  \end{align}
  for all $i,j=1,\dots, n$. Since $c(1)=0$, the relation  \eqref{unitary-eq-1} gives that
  \[
  0=\sum_{k=1}^n c(u_{ki}^*u_{kj})=\sum_{k=1}^n u_{ki}^*c(u_{kj}) +\sum_{k=1}^n c(u_{ki}^*)\underbrace{\varps(u_{kj})}_{=\delta_{k,j}} =\sum_{k=1}^n u_{ki}^*c(u_{kj}) +c(u_{ji}^*).
  \]
  So, $c(u_{ji}^*)=-\sum_{k=1}^n u_{ki}^*c(u_{kj})$, so the values of $c$ on the set $\{u_{ij}\mid 1\leq i,j\leq n \}$ determine it on the set $\{u_{ij}^*\mid 1\leq i,j\leq n \}$ and thus, in turn, on all of $\Pol(\GG)$. Similarly, using \eqref{unitary-eq-2} one sees that the  values of $c$ on the set $\{u_{ij}^*\mid 1\leq i,j\leq n \}$ determine its values on $\{u_{ij}\mid 1\leq i,j\leq n \}$.
\end{proof}

The next lemma gives a precise link between first $\L^2$-Betti numbers and the the first cohomology group.  Its proof combines several well-known facts and we do not claim originality of the result.
\begin{lemma}
  \label{lem:ltwo-betti-numbers-and-cohomology}
  Let $\GG$, $\KK$ be compact quantum groups such that $\Pol(\GG) \subset \Pol(\KK)$ is a Hopf $*$-subalgebra.  Then
  \begin{enumerate}
  \item $\beta_1^{(2)}(\hat \GG) = \dim_{L^\infty(\KK)} \Cohom^1(\Pol(\GG), M(\KK))$.
  \item $\beta_1^{(2)}(\hat \GG)  = 0$ if and only if $\dim_{L^\infty(\KK)} \Cohom^1(\Pol(\GG), M(\KK)) = 0$.
  \end{enumerate}
\end{lemma}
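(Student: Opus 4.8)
The plan is to connect the $\Tor$-definition of $\beta_1^{(2)}(\hat\GG)$ with first cohomology in three moves: first replace the coefficient module $L^\infty(\GG)$ by the algebra $M(\GG)$ of affiliated operators, then enlarge it further to $M(\KK)$, and finally match the dimension of the resulting $\Tor_1$-group with that of $\Cohom^1(\Pol(\GG),M(\KK))=\Ext^1_{\Pol(\GG)}(\CC,M(\KK))$ by a rank computation over the von Neumann regular ring $M(\KK)$. Statement (2) is then the special case $\beta_1^{(2)}(\hat\GG)=0$ of (1), so it is enough to prove (1).

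First I would recall the reduction to affiliated operators: since $M(\GG)$ is the classical ring of quotients of the finite von Neumann algebra $L^\infty(\GG)$ it is flat over $L^\infty(\GG)$ (a ring-theoretic fact of the kind treated in the appendix), and L\"uck's dimension is unchanged under this scalar extension; hence $\beta_p^{(2)}(\hat\GG)=\dim_{M(\GG)}\Tor_p^{\Pol(\GG)}(M(\GG),\CC)$ for every $p$. Next, the Hopf $*$-subalgebra inclusion $\Pol(\GG)\subset\Pol(\KK)$ is compatible with the Haar traces and admits a trace-preserving conditional expectation, so it induces inclusions $L^\infty(\GG)\subset L^\infty(\KK)$ and $M(\GG)\subset M(\KK)$; invoking flatness of the latter extension and the fact that inducing along it preserves the von Neumann dimension, one obtains $\beta_1^{(2)}(\hat\GG)=\dim_{M(\KK)}\Tor_1^{\Pol(\GG)}(M(\KK),\CC)$.

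On the cohomology side I would first note the standard identification $\Cohom^1(\Pol(\GG),\X)=\Ext^1_{\Pol(\GG)}(\CC,\X)$ valid for any $\Pol(\GG)$-module $\X$: from the augmentation sequence $0\to I\to\Pol(\GG)\xrightarrow{\varps}\CC\to 0$ with $I=\ker\varps$, restriction to $I$ identifies the $1$-cocycles with $\Hom_{\Pol(\GG)}(I,\X)$ and the inner ones with the image of $\X=\Hom_{\Pol(\GG)}(\Pol(\GG),\X)$. To then compare $\Tor_1$ with $\Ext^1$ over $M:=M(\KK)$, I would choose the start of a free resolution $\Pol(\GG)^{k_1}\xrightarrow{D}\Pol(\GG)^{k_0}\xrightarrow{\beta}\Pol(\GG)\xrightarrow{\varps}\CC\to 0$ with finitely many generators and relations (for a compact matrix quantum group $\beta$ lists the $u_{ij}-\delta_{ij}$ and $u_{ij}^*-\delta_{ij}$ and $D$ the defining relations), and apply $M\otimes_{\Pol(\GG)}-$ and $\Hom_{\Pol(\GG)}(-,M)$ to obtain the two complexes of free $M$-modules $M^{k_1}\xrightarrow{\overline D}M^{k_0}\xrightarrow{\overline\beta}M$ and $M\xrightarrow{\beta^{\vee}}M^{k_0}\xrightarrow{D^{\vee}}M^{k_1}$, in which $\overline{(\cdot)}$ and $(\cdot)^{\vee}$ are the maps given, respectively, by a matrix and by its transpose. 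Reading off homology and cohomology, $\dim_M\Tor_1^{\Pol(\GG)}(M,\CC)=\dim\ker\overline\beta-\dim\operatorname{Im}\overline D$ and $\dim_M\Ext^1_{\Pol(\GG)}(\CC,M)=\dim\ker D^{\vee}-\dim\operatorname{Im}\beta^{\vee}$; and the rank identities over the regular ring $M(\KK)$ — namely $\dim\ker T=(\#\,\text{columns of }T)-\dim\operatorname{Im}T$ and $\dim\operatorname{Im}T=\dim\operatorname{Im}T^{\vee}$, the latter because the range projections of a matrix and of its adjoint over a finite von Neumann algebra are Murray--von Neumann equivalent — force both quantities to equal $k_0-\dim\operatorname{Im}\overline D-\dim\operatorname{Im}\overline\beta$. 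This gives (1).

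The main obstacle is precisely the package of flatness and dimension-invariance statements used above: flatness of $M(\GG)$ over $L^\infty(\GG)$, flatness and dimension-preservation of the induction along $M(\GG)\subset M(\KK)$, and the rank identities over $M(\KK)$ — all of which hinge on these algebras being (rings of quotients of) finite von Neumann algebras, which is where the appendix is used. A secondary point to be careful about is that, for a genuinely infinite-dimensional compact quantum group, one needs a partial free resolution of $\CC$ with finitely generated terms in order for the transposition argument and the rank identities to make sense; this is automatic for the compact matrix quantum groups $U_n^+$, $O_n^+$ and $\AA_n$ to which the lemma is applied, which is all that is needed here.
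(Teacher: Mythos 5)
Your outline is correct and its first half is essentially the paper's: you pass to $M(\KK)$-coefficients by a chain of exact, dimension-preserving base changes (you route through $M(\GG)$, the paper composes $L^\infty(\KK)\otimes_{L^\infty(\GG)}-$ from L\"uck with $M(\KK)\otimes_{L^\infty(\KK)}-$ from Reich), arriving in both cases at $\beta_1^{(2)}(\hat\GG)=\dim\Tor_1^{\Pol(\GG)}(M(\KK),\CC)$. One misattribution: these flatness and dimension-invariance facts are due to L\"uck, Reich and Thom, not to the appendix --- the appendix only shows that $\Pol(O_2^+)$ is a domain, which enters much later (Proposition \ref{beta-one-of-HHn} and Theorem \ref{thm-A}). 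Where you genuinely diverge is the passage from $\Tor_1$ to cohomology: the paper quotes Thom's results that dualizing over $M(\KK)$ preserves dimension and that self-injectivity of $M(\KK)$ identifies $\Hom_{M(\KK)}(\Tor_p^{\Pol(\GG)}(M(\KK),\CC),M(\KK))$ with $\Ext_{\Pol(\GG)}^p(\CC,M(\KK))$, and then uses the bar resolution; this works for an arbitrary compact quantum group $\GG$. You instead take a finite partial free resolution coming from a finite presentation and count ranks over the regular ring $M(\KK)$. This is more concrete, but it buys less generality (the lemma has no finiteness hypothesis, though all quantum groups it is applied to are finitely presented compact matrix quantum groups), and the key rank identity is stated too loosely: the dual complex is given by the \emph{transpose}, not the adjoint, and its (co)homology is a \emph{right} $M(\KK)$-module whose dimension is computed via the right action. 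What you actually need is that the left dimension of the row image of a matrix equals the right dimension of its column image; this is true, via polar decomposition and the equality of the traces of the two support projections (which live in matrix algebras of different sizes), but spelling it out amounts to re-proving Thom's duality by hand, so it deserves either a proof or the citation.

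A second point: dismissing item (2) as a formal consequence of (1) does prove the statement as literally written, but it misses what the paper's proof of (2) actually establishes and what is used downstream, namely that $\Cohom^1(\Pol(\GG),M(\KK))$ is (isomorphic to) a dual module over the self-injective ring $M(\KK)$, so by Thom's Corollaries 3.3 and 3.4 its $L^\infty(\KK)$-dimension vanishes if and only if the module itself vanishes. It is this module-vanishing form --- ``$\beta_1^{(2)}=0$ implies every $1$-cocycle with values in $M(\KK)$ is inner'' --- that is invoked in Lemma \ref{dim-of-Z}, Proposition \ref{beta-one-of-HHn} and the proof of Theorem \ref{thm-A}. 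Your rank computation only yields equality of dimensions, so you should add this observation (or derive it from your own setup, noting that your $\Ext^1$ is computed as a subquotient that is again a dual-type module) if the lemma is to serve its later purpose.
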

\begin{proof}
  Since the inclusion $\Pol(\GG)\subset \Pol(\KK)$ must preserve the Haar state it extends to a trace preserving inclusion $L^\infty(\GG)\subset L^\infty(\KK)$ and by \cite[Theorem 6.29]{Luck02} the functor $L^\infty(\KK)\tens_{L^\infty(\GG)}-$ is therefore exact and dimension preserving.  By \cite[Proposition 2.1(iv) \& Theorem 3.11(v)]{reich01} the same is true for the functor $M(\KK)\tens_{L^\infty(\KK)}-$ and thus also for the composition $M(\KK)\tens_{L^\infty(\GG)}-$ of the two; hence
\[
\beta_p^{(2)}(\hat{\GG})=\dim_{L^\infty(\KK)} M(\KK)\tens_{L^\infty(\GG)} \Tor_p^{\Pol(\GG)}(L^\infty(\GG),\CC)  =\dim_{L^\infty(\KK)}\Tor_p^{\Pol(\GG)}(M(\KK),\CC)
\text{.}
\]
Moreover, by \cite[Corollary 3.4]{Thom06a}, dualizing is dimension preserving and thus  
\[
\dim_{L^\infty(\KK)}\Tor_p^{\Pol(\GG)}(M(\KK),\CC)=\dim_{L^\infty(\KK)}\Hom_{M(\KK)}\left(\Tor_p^{\Pol(\GG)}\big(M(\KK),\CC\big),M(\KK)\right)
\text{,}
\]
where the dimension on the right hand side is computed relative to the natural right $L^\infty(\KK)$-action on the dual module. Lastly, since $M(\KK)$ is a self-injective ring we get  (cf.~\cite[Theorem 3.5]{Thom06a} and its proof) an isomorphism of right $L^\infty(\KK)$-modules
\[
\Hom_{M(\KK)}\left(\Tor_p^{\Pol(\GG)}\big(M(\KK),\CC\big),M(\KK)\right)\simeq \Ext^p_{\Pol(\GG)}(\CC,M(\KK))
\text{,}
\]
and  upon computing the latter via the bar-resolution of the trivial $\Pol(\GG)$-module $\CC$, we see that $\Ext^1_{\Pol(\GG)}(\CC,M(\KK))$ identifies with $\Cohom^1(\Pol(\GG), M(\KK))$ as a right $L^\infty(\KK)$-module. In total we therefore obtain that
\[
\beta_1^{(2)}(\hat{\GG})=\dim_{L^\infty(\KK)}\Cohom^1(\Pol(\GG), M(\KK))
\text{.}
\]
This proves the first statement of the lemma.  For the second statement, 
note that by  \cite[Corollaries 3.3 and 3.4]{Thom06a} one has $\dim_{L^\infty(\KK)} \Cohom^1(\Pol(\GG), M(\KK)) = 0$ if and only if $ \Cohom^1(\Pol(\GG), M(\KK))=0$,  since $\Cohom^1(\Pol(\GG), M(\KK))$ can be seen as a dual module by what was just proven.
\end{proof}

Lemma \ref{lem:ltwo-betti-numbers-and-cohomology} will be used subsequently  to compute the first $\L^2$-Betti numbers of $\hat{\AA}_n ,\hat{O}_n^+, \hat{U}_n^+$ and  $\hat{\HH}_n$ as the dimension of their first cohomology groups with coefficients in $M(\HH_n)$.

\subsection{Cocycles on free products}
\label{sec:cocycles-free-products}

The primary aim in this section is to show that the free product construction in the category of complex algebras satisfies a universal property with respect to cocycles. However, this is most naturally done in the general setting of derivations into bimodules, but at the end of the section we will specialize to the case of polynomial algebras on compact quantum groups and their cocycles. The results stated are almost certainly well known to  the experts in the field, but since we were unable to find a suitable reference we have included this short account on the matter. In what follows,  let $A$ be a unital (complex) algebra and $\X$ an $A$-bimodule. 

\begin{lem}\label{derivation-lem}
The set $A\times \X$ is an algebra, with unit $(1,0)$, when endowed with the product $(a,x)\cdot(b,y):=(ab, ay+xb)$. Moreover,  if $\delta\colon A\to \X$ is a derivation then $\varphi\colon A\to A\times \X$ given by $\varphi(a)=(a,\delta(a))$ is a unital algebra-homomorphism
\end{lem}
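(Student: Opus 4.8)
The plan is to establish both assertions by direct verification, since both are purely formal consequences of the associativity of $A$, the bimodule axioms on $\X$, and the Leibniz rule for $\delta$. First I would observe that the product $(a,x)\cdot(b,y):=(ab,\,ay+xb)$ is $\CC$-bilinear, which is immediate from bilinearity of the multiplication of $A$ and of the two module actions on $\X$. The only computation with any content is associativity: expanding the left-hand side gives
\[
\big((a,x)\cdot(b,y)\big)\cdot(c,z)=\big((ab)c,\ (ab)z+(ay)c+(xb)c\big),
\]
while expanding the right-hand side gives
\[
(a,x)\cdot\big((b,y)\cdot(c,z)\big)=\big(a(bc),\ a(bz)+a(yc)+x(bc)\big),
\]
and the two coincide, termwise, by associativity in $A$ together with the left-module axiom $(ab)z=a(bz)$, the bimodule-compatibility axiom $(ay)c=a(yc)$, and the right-module axiom $(xb)c=x(bc)$. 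Finally, $(1,0)$ is a two-sided unit because $(1,0)\cdot(a,x)=(a,\,x+0)=(a,x)$ and $(a,x)\cdot(1,0)=(a,\,0+x)=(a,x)$, using that $\X$ is a unital bimodule.

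For the second assertion I would first record that any derivation annihilates the unit: from $\delta(1)=\delta(1\cdot 1)=1\delta(1)+\delta(1)1=2\delta(1)$ one obtains $\delta(1)=0$, so $\varphi(1)=(1,\delta(1))=(1,0)$. Linearity of $\varphi$ is inherited directly from linearity of $\delta$. Multiplicativity is nothing but the Leibniz rule repackaged:
\[
\varphi(a)\cdot\varphi(b)=(a,\delta(a))\cdot(b,\delta(b))=\big(ab,\ a\delta(b)+\delta(a)b\big)=\big(ab,\ \delta(ab)\big)=\varphi(ab).
\]
Hence $\varphi$ is a unital algebra homomorphism.

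There is no serious obstacle here; the only points calling for minor care are the bookkeeping in the associativity check --- matching each of the three summands in the second coordinate to the correct bimodule axiom --- and the remark that the hypothesis ``$\delta$ is a derivation'' already forces $\delta(1)=0$, which is exactly what makes $\varphi$ preserve units.
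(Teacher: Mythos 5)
Your proof is correct and follows the same route as the paper, which simply notes that everything is seen by straightforward calculations; you have carried out those verifications (associativity via the bimodule axioms, the unit, and multiplicativity of $\varphi$ via the Leibniz rule together with $\delta(1)=0$) explicitly and accurately.
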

\begin{proof}
This is all seen by straight forward calculations.
 \end{proof}

Denote by $A\ast B$ the free product \cite{vdn} of two unital algebras,  $A$ and $B$, and assume that $\X$ is an $A\ast B$-bimodule. It is therefore also an $A$-bimodule as well as a $B$-bimodule via the natural inclusions of the two algebras into $A\ast B$. Let now $\delta_1\colon A\to \X$ and $\delta_2\colon B\to \X$ be derivations and denote by $\varphi_1\colon A\to A\times \X\subset (A\ast B)\times \X$ and $\varphi_2\colon B\to (A\ast B)\times \X$ the corresponding algebra-homomorphisms given by Lemma \ref{derivation-lem}. Then, by the universal property of the free product, we obtain an algebra homomorphism
$\varphi=\varphi_1\ast \varphi_2\colon A\ast B\to (A\ast B)\times \X$ which restricts to the $\varphi_1$ and $\varphi_2$ respectively. Write the two components of $\varphi$ as $(\alpha,\delta)$; i.e.~$\alpha$ and $\delta$ arise, respectively, as $\varphi$ composed with the natural projections $(A\ast B)\times \X\to  A\ast B $ and $(A\ast B)\times \X\to \X$.
\begin{lem}\label{free-product-of-derivation-lem}
In the notation just introduced, the map $\alpha\colon A\ast B\to A\ast B$ is the identity map and the map $\delta\colon A\ast B\to \X$ is a derivation which extends $\delta_1$ and $\delta_2$, and is unique with this property.
\end{lem}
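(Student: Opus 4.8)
The plan is to unwind the definition of $\varphi = \varphi_1 \ast \varphi_2$ and to exploit uniqueness statements coming from the universal property of the free product $A \ast B$. I would begin by recording what $\varphi$ does on the two factors: by construction $\varphi|_A = \varphi_1$ and $\varphi|_B = \varphi_2$, so composing with the projection $\pi_1 \colon (A\ast B)\times \X \to A\ast B$ gives $\alpha|_A = \pi_1\circ\varphi_1 = \id_A$ and $\alpha|_B = \pi_1\circ\varphi_2 = \id_B$ (since the first component of $\varphi_i$ is just the inclusion), while composing with $\pi_2 \colon (A\ast B)\times \X \to \X$ gives $\delta|_A = \delta_1$ and $\delta|_B = \delta_2$. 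Now $\pi_1$ is a unital algebra homomorphism, so $\alpha = \pi_1\circ\varphi \colon A\ast B \to A\ast B$ is a unital algebra homomorphism restricting to $\id_A$ and $\id_B$ on the two factors; but the identity map $A\ast B \to A\ast B$ is the unique such homomorphism by the universal property of the free product, whence $\alpha = \id_{A\ast B}$.

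With $\alpha = \id$ in hand, the derivation property of $\delta$ is automatic: for $w, w' \in A\ast B$ we have $\varphi(ww') = \varphi(w)\varphi(w')$, and writing out the product in $(A\ast B)\times \X$ using Lemma \ref{derivation-lem} together with $\varphi(w) = (\alpha(w),\delta(w)) = (w,\delta(w))$ gives
\begin{equation*}
(ww', \delta(ww')) = (w,\delta(w))\cdot(w',\delta(w')) = (ww',\, w.\delta(w') + \delta(w).w')\text{,}
\end{equation*}
so the second components agree and $\delta(ww') = w.\delta(w') + \delta(w).w'$; that is, $\delta$ is a derivation $A\ast B \to \X$ with respect to the given $A\ast B$-bimodule structure on $\X$. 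We already observed $\delta|_A = \delta_1$ and $\delta|_B = \delta_2$, so $\delta$ extends both $\delta_1$ and $\delta_2$.

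For uniqueness, suppose $\delta' \colon A\ast B \to \X$ is another derivation extending $\delta_1$ and $\delta_2$. By Lemma \ref{derivation-lem} the map $\psi \colon A\ast B \to (A\ast B)\times \X$, $w \mapsto (w,\delta'(w))$, is a unital algebra homomorphism, and its restriction to $A$ is $a\mapsto(a,\delta_1(a)) = \varphi_1(a)$ while its restriction to $B$ is $\varphi_2(b)$. By the universal property of the free product there is exactly one homomorphism $A\ast B \to (A\ast B)\times\X$ restricting to $\varphi_1$ and $\varphi_2$ on the factors, namely $\varphi$; hence $\psi = \varphi$, and comparing second components gives $\delta' = \delta$. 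This completes the proof.

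I do not expect any genuine obstacle here: the statement is a formal consequence of the universal property of the free product applied twice, once in the target $A\ast B$ (to identify $\alpha$) and once in $(A\ast B)\times\X$ (to get existence-via-restriction and uniqueness of $\delta$). The only point demanding a little care is bookkeeping of the bimodule actions — making sure that the $A$-bimodule and $B$-bimodule structures used to define $\delta_1,\delta_2$ are precisely the restrictions of the $A\ast B$-bimodule structure on $\X$, which is exactly the hypothesis in the setup preceding the lemma, so that the products in $(A\ast B)\times\X$ restrict correctly along the inclusions $A,B \hookrightarrow A\ast B$.
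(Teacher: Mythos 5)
Your proof is correct and follows essentially the same route as the paper: the multiplicativity computation for $\varphi$ identifies $\alpha$ as a unital algebra homomorphism restricting to the identity on $A$ and $B$ (hence $\alpha=\id$ by the universal property), and the same computation then yields the derivation property of $\delta$. The only deviation is in the uniqueness step, where you re-apply Lemma \ref{derivation-lem} to a competing derivation $\delta'$ and invoke the uniqueness clause of the universal property of $A\ast B$, whereas the paper simply evaluates $\delta$ on words in elements of $A$ and $B$; both arguments are immediate and equally valid.
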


\begin{proof}
Using that $\varphi$ is an algebra-homomorphism we obtain, for $w_1,w_2\in A\ast B$, that
\begin{align} 
(\alpha(w_1w_2),\delta(w_1w_2)) &= \varphi(w_1w_2)=\varphi(w_1)\varphi(w_2) \notag\\
&=(\alpha(w_1), \delta(w_1))\cdot (\alpha(w_2), \delta(w_2))\notag\\
&=(\alpha(w_1)\alpha(w_2), \alpha(w_1)\delta(w_2) + \delta(w_1)\alpha(w_2)) \label{dagger}
\end{align}
It follows that $\alpha$ is multiplicative, and since $\varphi$ is a unital algebra-homomorphism, $\alpha$ is also unital and linear; that is, a unital algebra-homomorphism. Moreover, the construction of $\varphi_1$ and $\varphi_2$ implies that $\alpha$ restricts to the identity on both $A$ and $B$ and hence $\alpha$ is the identity map. Knowing this, the fact that $\delta$ is a derivation follows from the computation \eqref{dagger}. Moreover, it follows from the construction that the derivation $\delta$ restricts to the original derivations $\delta_1$ and $\delta_2$ respectively. Uniqueness follows directly from the derivation property by applying $\delta$ to words in elements from $A$ and $B$.
\end{proof}

Returning to the case of compact quantum groups, consider two such, $\GG$ and $\HH$, as well as a  left module $\X$ for the Hopf $*$-algebra $\Pol(\GG)\ast\Pol(\HH)$ \cite{wang}. Endowing $\X$ with the right action given by the counit, a 1-cocycle can be viewed as derivation, and Lemma \ref{free-product-of-derivation-lem} therefore shows that for any two 1-cocycles $c_1\colon \Pol(\GG) \to \X$ and $c_2\colon \Pol(\HH)\to \X $ there exists a unique 1-cocycle $c:=c_1\ast c_2 \colon \Pol(\GG)\ast \Pol(\HH) \to \X$ which extends $c_1$ and $c_2$.

\subsection{Co-amenability}
For the proofs in Section \ref{new-computations-section}, we will need the following characterization of (co-)amenability. Recall that a compact quantum group is \emph{co-amenable} if the counit $\varps\colon \Pol(\GG)\to \CC$ extends to $C(\GG)_\red$. This is equivalent to \emph{amenability} of the discrete dual $\hat{\GG}$; the latter being defined in terms of  the existence of an invariant state on $\ell^\infty(\hat{\GG})$ (cf.~\cite{murphy-tuset, tomatsu-amenable} for details). Regarding the  quantum groups under consideration here, it is known that $\hat{U}_n^+$ is non-amenable for all $n\geq 2$ and that $\hat{\AA}_n$ and $\hat{O}_n^+$ are amenable if and only if $n=2$ \cite{banica-sym, banica-libre-U_n}.

\begin{lem}\label{coamenable-lem}
A compact quantum group  is coamenable if and only if no element in $\ker(\varps)$ is invertible in $C(\GG)_\red$.
\end{lem}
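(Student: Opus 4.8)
The forward direction is immediate: if $\GG$ is coamenable then $\varps$ extends to a character $\bar\varps\colon C(\GG)_\red\to\CC$, and since a character is multiplicative and unital it cannot send an invertible element to $0$; hence no element of $\ker(\varps)$ can be invertible in $C(\GG)_\red$. So the content is in the converse, and the plan is to prove the contrapositive: assuming $\GG$ is \emph{not} coamenable, I would produce an invertible element of $C(\GG)_\red$ lying in $\ker(\varps)$.

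The key idea is to look at the kernel $I:=\ker\big(\lambda\colon C(\GG)_\max\to C(\GG)_\red\big)$ of the canonical surjection from the universal to the reduced $C^*$-algebra. The counit $\varps$ is always defined on $C(\GG)_\max$ (it is a character of the Hopf $*$-algebra, hence bounded for the maximal norm), and coamenability is precisely the statement that $\varps$ factors through $\lambda$, i.e.\ that $I\subseteq\ker(\varps|_{C(\GG)_\max})$. Thus if $\GG$ is not coamenable there exists $a\in I$ with $\varps(a)\neq 0$; normalizing, we may take $a\in I$ with $\varps(a)=1$. Then $b:=1-a$ satisfies $\varps(b)=0$, while in $C(\GG)_\red$ we have $\lambda(b)=\lambda(1)-\lambda(a)=1-0=1$, which is invertible. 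Hence $\lambda(b)\in\ker(\varps_\red)$ — where here $\varps_\red$ makes sense on the element $\lambda(b)$ as $\varps(b)$, but one should be a little careful since a priori $\varps$ need not descend to $C(\GG)_\red$. The cleaner formulation: the element $\lambda(b)\in C(\GG)_\red$ is invertible (it equals $1$), it lies in the image $\lambda(\ker\varps)$ since $b\in\ker(\varps|_{C(\GG)_\max})$, and $\ker(\varps)\subset C(\GG)_\red$ — interpreted as the closure of the image of the Hopf-algebra-level augmentation ideal, or equivalently as $\lambda(\ker(\varps|_{\max}))$ — therefore contains an invertible element. One has to make the statement of the lemma precise about which $C^*$-algebra carries the ``$\ker(\varps)$'': reading it in context, $\ker(\varps)$ should be understood as the closed ideal of $C(\GG)_\red$ generated by $\{x-\varps(x)1 : x\in\Pol(\GG)\}$, equivalently the closure of $\lambda(\ker\varps|_{\Pol(\GG)})$; and since $b$ can moreover be approximated by elements of $\ker(\varps|_{\Pol(\GG)})$ (as $\Pol(\GG)$ is dense in $C(\GG)_\max$ and $\varps$ is $\max$-continuous), $\lambda(b)$ lies in this closed ideal.

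The main obstacle, and the only genuinely delicate point, is pinning down the correct interpretation of ``$\ker(\varps)$ in $C(\GG)_\red$'' when $\GG$ is not coamenable — since then $\varps$ literally has no continuous extension to $C(\GG)_\red$, the symbol $\ker(\varps)$ has to be read as the relevant closed ideal rather than the kernel of an actual map. Once that is fixed, the argument above is complete modulo the standard fact that coamenability is equivalent to $I\subseteq\ker(\varps|_{\max})$, which is essentially the definition (the counit extends to $C(\GG)_\red=C(\GG)_\max/I$ iff it annihilates $I$). I would phrase the final write-up so as to first establish the easy implication via multiplicativity of the extended counit, then argue the converse by contraposition exactly as above, taking care to note that the invertible element $1-a$ produced actually lies in (the closure of the image of) $\ker\varps|_{\Pol(\GG)}$.
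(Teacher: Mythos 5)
Your forward direction coincides with the paper's and is fine. The converse, however, has a genuine gap coming from your reinterpretation of $\ker(\varps)$. In the paper $\varps$ is the counit on $\Pol(\GG)$, so $\ker(\varps)$ is the augmentation ideal of $\Pol(\GG)$, viewed inside $C(\GG)_\red$ --- no closure and no descended map are involved, and this reading is forced by how the lemma is later used: in Proposition \ref{beta-one-of-HHn} and in the proof of Theorem \ref{thm-A} one needs an element $y_0\in \Pol(O_n^+)\cap\ker(\varps)$ (respectively $x\in\Pol(\AA_n)\cap\ker(\varps)$) invertible in the reduced $C^*$-algebra, because the relation $x\zeta=\varps(x)\zeta$ only holds for $x$ in the Hopf $*$-algebra. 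Your argument does not produce such an element: the invertible element you exhibit is $\lambda(b)=1$, which certainly does not lie in $\ker(\varps|_{\Pol(\GG)})$ (its counit is $1$); it only lies in the \emph{closure} of $\lambda(\ker\varps|_{\Pol(\GG)})$. By replacing $\ker(\varps)$ with that closed ideal you have proved a weaker statement (``coamenable iff the closed ideal generated by the augmentation ideal is proper''), which is not the lemma the paper needs.

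The good news is that your construction is one short step away from a complete proof, and that step is exactly the paper's punchline. From your element $b\in\ker(\varps_{\max})$ with $\lambda(b)=1$ you conclude that the closed two-sided ideal $\overline{\ker(\varps|_{\Pol(\GG)})}\subset C(\GG)_\red$ contains $1$, hence equals $C(\GG)_\red$; in other words $\ker(\varps|_{\Pol(\GG)})$ is norm-dense in $C(\GG)_\red$. Since the group of invertible elements of a unital $C^*$-algebra is open, this dense set must meet it, producing an invertible element genuinely inside $\ker(\varps|_{\Pol(\GG)})$, as required. (The paper reaches the same density statement differently, by extending $\varps$ boundedly to $\overline{\ker\varps}\oplus\CC 1$ and deriving a contradiction with non-coamenability, whereas you obtain it via the ideal $\ker(\lambda\colon C(\GG)_\max\to C(\GG)_\red)$; either route is fine, but the openness-of-invertibles step cannot be omitted.)
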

We remark that the lemma follows directly from Kesten's criterion \cite[Theorem 6.1]{banica-subfactor}, but since it is easy to give a short and direct proof we have included it here for the benefit of the reader.
\begin{proof}
If $\GG$ is coamenable then $\varps$ extends to a character on $C(\GG)_\red$ and hence no element in its kernel can be invertible. Conversely, if $\GG$ is not coamenable, then $J:=\overline{\ker(\varps)}$ is a closed two-sided ideal in $C(\GG)_{\red}$ and we now claim that $J=C(\GG)_\red$. If this were not the case, then $1\notin J$ and hence we may extend $\varps$ to $J\oplus \CC 1$ by setting $\varps(x+\alpha 1)=\alpha$. However, $J\oplus \CC 1$ is easily seen to be closed 
and contains the dense subset $\Pol(\GG)$ so $J + \CC 1=C(\GG)_\red$, contradicting the fact that $\varps$ does not extend to $C(\GG)_\red$.
Thus,  $\ker(\varps)$ is dense in $C(\GG)_\red$ and since the group of invertible elements in $C(\GG)_\red$ is open it must intersect $\ker(\varps)$.
\end{proof}

\section{New computations of $\L^2$-Betti numbers}
\label{new-computations-section}

In this section we turn to the concrete computations of $\L^2$-Betti numbers announced  in the introduction.   In Section \ref{sec:vanishing-A-n} we show that all $\L^2$-Betti numbers of $\hat{\AA}_n$ vanish, thus proving Theorem~\ref{thm-B}.  We then proceed to the first $\L^2$-Betti number of the free product quantum groups $\hat \HH_n$ in Section \ref{sec:computation-H_n}.  Finally, in Section \ref{sec:computation-U_n} we combine these results to prove our main Theorem \ref{thm-A}. 
 Let us describe our strategy of proof for Theorem \ref{thm-A} in more detail.  We consider $\Pol(U_n^+) \subset \Pol(\HH_n)$ as described in Section \ref{sec:universal-quantum-groups} and may --- thanks to Lemma \ref{lem:ltwo-betti-numbers-and-cohomology} --- compute the first $\L^2$-Betti number of $\hat U_n^+$ as the von Neumann dimension of the $L^\infty(\HH_n)$-module $\Cohom^1(\Pol(U_n^+), M(\HH_n))$.  Since the first $\L^2$-Betti number of $\hat O_n^+$ vanishes, natural candidates for 1-cocycles representing cohomology are the restrictions to $\Pol(U_n^+)$ of free product 1-cocycles of the form $0 * c$ on $\Pol(\HH_n) = \Pol(O_n^+) * \Pol(S^1)$.  On the one hand, we analyse the behaviour of $0*c$ on $\Pol(\AA_n) \subset \Pol(U_n^+)$ in order to show that triviality of $0*c \hspace{-0.1cm}\restriction_{\Pol(U_n^+)}$ in $\Cohom^1(\Pol(U_n^+), M(\HH_n))$ implies $c = 0$.  On the other hand, we use vanishing of the first $\L^2$-Betti number of $\hat{\AA}_n$, together with Lemma \ref{lem:ltwo-betti-numbers-and-cohomology}, to show that every 1-cocycle on $\Pol(U_n^+)$ with values in $M(\HH_n)$ can be extended to $\Pol(\HH_n)$.  Vanishing of the $\L^2$-Betti numbers of $\hat O_n^+$ and another application of Lemma \ref{lem:ltwo-betti-numbers-and-cohomology} show that the extension can be represented by a free product 1-cocycle.  This establishes a one-to-one correspondence between elements in the 1-cohomology  $\Cohom^1(\Pol(U_n^+), M(\HH_n))$ and choices of the value $c(\id_{S^1}) \in M(\HH_n)$.  So, in turn, the first $\L^2$-Betti number of $\hat{U}_n^+$ equals the $L^\infty(\HH_n)$-dimension of $M(\HH_n)$, which is one.

\subsection{Vanishing of the $\L^2$-Betti numbers of $\hat{\AA}_n$}
\label{sec:vanishing-A-n}

We will prove Theorem \ref{thm-B}, by relating the $\L^2$-Betti numbers of $\hat \AA_n$ to those of $\hat O_n^+$, which are known to vanish \cite{thom-collins}.  The universal property of $\Pol(O_n^+)$ provides us with a unique $*$-automorphism $\alpha \colon \Pol(O_n^+) \to \Pol(O_n^+)$ satisfying $\alpha(v_{ij}) = -v_{ij}$ for all $i, j \in \{1, \dotsc, n\}$, and the induced $\ZZ/2\ZZ$-action on $\Pol(O_n^+)$ provides the necessary link between the two quantum groups.  For a left (respectively right) $\Pol(O_n^+)$-module $\X$  we denote by ${_\alpha}\X$ (respectively $\X_\alpha)$ the vector space $\X$ considered as a $\Pol(O_n^+)$-module via $\alpha$; i.e.~with left (respectively right) action given by $a.x:=\alpha(a)x$ (respectively $x.a=x\alpha(a))$.
The strategy of proof for Theorem \ref{thm-B} can now be described as follows. The $p$-th $\ell^2$-Betti number  $\beta_p^{(2)}(\hat{\AA}_n)$ is, by definition, the von Neumann dimension of the $L^\infty(\AA_n)$-module $\Tor_p^{\Pol(\AA_n)}(L^\infty(\AA_n),\CC)$.  We will reason that inducing this Tor-module to $L^\infty(O_n^+)$, we can calculate the von Neumann dimension of the $L^\infty(O_n^+)$-module $\Tor_p^{\Pol(\AA_n)}(L^\infty(O_n^+),\CC)$ instead.  A flat base change will then reduce our problem to a concrete identification of the $\Pol(O_n^+)$-module $\Pol(O_n^+) \tens_{\Pol(\AA_n)} \CC$, which turns out to split as a direct sum of the trivial module $\CC$ and the twisted trivial module ${_\alpha}\CC$.  Since twisting is compatible with Tor and preserves the von Neumann dimension, we will be able to conclude the proof by appealing to the known vanishing results for $\ell^2$-Betti numbers of $\hat O_n^+$.

\begin{lemma}
  \label{lem:C-plus-C}
  The left $\Pol(O_n^+)$-module $\Pol(O_n^+)\tens_{\Pol(\AA_n)}\CC$ is isomorphic to the direct sum $\CC \oplus {_\alpha}\CC$ where $\CC$ is considered a $\Pol(O_n^+)$-module via the counit.
\end{lemma}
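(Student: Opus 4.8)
\emph{Plan of proof.} Write $R:=\Pol(O_n^+)$ and $S:=\Pol(\AA_n)$, the latter identified via Lemma~\ref{A_n-description} with the unital subalgebra of $R$ generated by the products $v_{ij}v_{kl}$. Since $\alpha^2=\id$, the algebra $R$ splits as $R=R_0\oplus R_1$ into the $(+1)$- and $(-1)$-eigenspaces of $\alpha$; as $R$ is linearly spanned by words in the generators $v_{ij}$ and $\alpha$ scales a word of length $m$ by $(-1)^m$, the subspace $R_0$ is the span of the even-length words and $R_1$ that of the odd-length words. Every even-length word factors as a product of the generators $v_{ij}v_{kl}$ of $S$, so in fact $R_0=S$; consequently $\alpha$ restricts to the identity on $S$, and $R_1R_1\subseteq R_0=S$.

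The module in question is $R\otimes_S\CC = R/RS_+$, where $S_+:=S\cap\ker\varps$ is the augmentation ideal of $S$ and $RS_+$ is the (left) ideal of $R$ it generates. My candidate for the isomorphism is the linear map $\Phi\colon R\to \CC\oplus{_\alpha}\CC$, $\Phi(r)=(\varps(r),\varps(\alpha(r)))$. Using that $\varps$ and $\alpha$ are algebra homomorphisms and that, by definition of the twisted module, the left $R$-action on $\CC\oplus{_\alpha}\CC$ is $r.(a,b)=(\varps(r)a,\varps(\alpha(r))b)$, one verifies at once that $\Phi$ is left $R$-linear; it is surjective because $\Phi(1)=(1,1)$ and $\Phi(v_{11})=(1,-1)$. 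Hence it suffices to show $\ker\Phi=RS_+$, which then forces $\Phi$ to descend to an isomorphism $R/RS_+\xrightarrow{\ \sim\ }\CC\oplus{_\alpha}\CC$ of left $R$-modules, i.e.~the assertion.

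One inclusion is routine: for $r\in R$ and $s\in S_+$ one has $\varps(rs)=\varps(r)\varps(s)=0$ and, since $\alpha(s)=s$, also $\varps(\alpha(rs))=\varps(\alpha(r))\varps(s)=0$, so $RS_+\subseteq\ker\Phi$. For the reverse inclusion, take $r=r_0+r_1\in\ker\Phi$ with $r_0\in R_0=S$ and $r_1\in R_1$. Reading off $0=\varps(r)=\varps(r_0)+\varps(r_1)$ and $0=\varps(\alpha(r))=\varps(r_0)-\varps(r_1)$ (and adding resp.\ subtracting) gives $\varps(r_0)=\varps(r_1)=0$, so $r_0\in S_+\subseteq RS_+$. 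For $r_1$ I would use the relation $\sum_{k=1}^{n}v_{1k}^2=1$ — the $(1,1)$-entry of $vv^t=1$ — to write $r_1=\sum_{k=1}^{n}v_{1k}(v_{1k}r_1)$; here each coefficient $v_{1k}r_1$ lies in $R_1R_1\subseteq S$, and $\varps(v_{1k}r_1)=\varps(v_{1k})\varps(r_1)=\delta_{1k}\cdot 0=0$, so $v_{1k}r_1\in S_+$, whence $r_1\in RS_+$. Thus $r=r_0+r_1\in RS_+$ and $\ker\Phi=RS_+$.

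The only genuinely substantive point is the inclusion $\ker\Phi\subseteq RS_+$, and within it the handling of the odd part $r_1$: the idea is to exploit the defining orthogonality relations of $O_n^+$ to rewrite an arbitrary element of $R_1$ as an $S$-linear combination of the first-row generators $v_{1k}$, so that the single scalar condition $\varps(r_1)=0$ propagates to the vanishing of $\varps$ on all of those $S$-coefficients. Everything else is bookkeeping with the $\ZZ/2\ZZ$-grading and the counit.
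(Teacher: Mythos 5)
Your proof is correct, and it takes a route that is organized differently from the paper's. The paper maps $\Pol(O_n^+)$ onto the group algebra $\CC[\ZZ/2\ZZ]$ via $v_{ij}\mapsto \delta_{ij}z$, notes that this homomorphism agrees with $\varps$ on $\Pol(\AA_n)$ and hence induces a surjection $\Pol(O_n^+)\tens_{\Pol(\AA_n)}\CC\to\CC[\ZZ/2\ZZ]\cong\CC\oplus{_\alpha}\CC$, and then proves injectivity by a spanning argument inside the tensor product: the orthogonality relations give $[v_{ij}]=0$ for $i\neq j$ and $[v_{ii}]=[v_{11}]$, so the module has dimension at most two. You instead work upstairs in $R=\Pol(O_n^+)$: you use the eigenspace decomposition $R=R_0\oplus R_1$ for $\alpha$ together with the identification $R_0=\Pol(\AA_n)$ (which does follow from Lemma \ref{A_n-description}, since every even-length word is a product of the quadratic generators), and you compute directly that the kernel of the $R$-linear map $r\mapsto(\varps(r),\varps(\alpha(r)))$ is exactly the left ideal $RS_+$, absorbing the odd part via $r_1=\sum_k v_{1k}(v_{1k}r_1)$ with coefficients $v_{1k}r_1\in S_+$. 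The essential computational input is the same in both arguments --- the relation $\sum_k v_{1k}^2=1$ is what collapses the odd-degree contributions --- but your version makes the fixed-point description of $\Pol(\AA_n)$ as the even part of $\Pol(O_n^+)$ explicit and yields the slightly stronger statement $RS_+=\ker(\varps,\varps\circ\alpha)$, whereas the paper's version needs no grading considerations and only quotient-module bookkeeping. The one small point worth spelling out in your write-up is that the spans of the even- and odd-length words are precisely the $\pm 1$-eigenspaces of $\alpha$: each span sits inside the corresponding eigenspace and together they exhaust $R$, so equality holds.
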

\begin{proof}
  Denote by $1,z \in \CC[ \ZZ/2 \ZZ]$ the canonical unitaries.  Since $z$ is a self-adjoint unitary, the universal property of $\Pol(O_n^+)$ provides us with $*$-homomorphism $\pi \colon \Pol(O_n^+) \to \CC[\ZZ/2\ZZ]$ satisfying $\pi(v_{ij})=\delta_{ij}z$.  We consider $\CC[\ZZ/2\ZZ]$ as a left $\Pol(O_n^+)$-module via $\pi$.  Note that $\CC[\ZZ/2\ZZ] \cong \CC \oplus {_\alpha}\CC$ as left $\Pol(O_n^+)$-modules, since on both modules the $\Pol(O_n^+)$-action factors through $\pi$ and defines the left-regular representation of $\ZZ/2\ZZ$.  In order to prove the lemma, we will show that $\Pol(O_n^+)\tens_{\Pol(\AA_n)}\CC \cong \CC[\ZZ/2\ZZ]$.  
 Since 
  \begin{equation*}
    \pi(v_{ij}v_{kl}) = \delta_{i,j} \delta_{k,l} 1 = \varps(v_{ij}v_{kl})1
  \end{equation*}
 for all $i,j,k,l \in \{1, \dotsc, n\}$, we obtain a $\Pol(O_n^+)$-modular map 
 \begin{equation*}
   \pi \tens_{\Pol(\AA_n)} \id \colon \Pol(O_n^+) \tens_{\Pol(\AA_n)} \CC \to \CC[\ZZ/2\ZZ]
   \text{,}
 \end{equation*}
 which is obviously surjective.  We prove injectivity of $\pi \tens_{\Pol(\AA_n)} \id$, by showing that the dimension of $\Pol(O_n^+) \tens_{\Pol(\AA_n)} \CC$ is at most two.  Let us write $[x] = x \tens 1$ for the image of $x \in \Pol(O_n^+)$ in $\Pol(O_n^+) \tens_{\Pol(\AA_n)} \CC$.  Since $v_{ij} v_{kl} \in \Pol(\AA_n)$, it is clear that $\Pol(O_n^+) \tens_{\Pol(\AA_n)} \CC$ is spanned by the elements $[1]$ and $[v_{ij}]$ for $i,j \in \{ 1, \dotsc, n \}$.  Furthermore, for $i\neq j$ we have 
\begin{equation*}
  [v_{ij}]
  =
  \left(\sum_{k=1}^n v_{k1}v_{k1} \right)
  [v_{ij}]=\sum_{k=1}^n v_{k1}[v_{k1}v_{ij}]=\sum_{k=1}^n v_{k1}[\varps(v_{k1})\varps(v_{ij}))1]
  =
  0
  \text{,}
\end{equation*}
and
\begin{equation*}
  [v_{ii}]
  =
  \left( \sum_{k=1}^n v_{jk}v_{jk} \right)[v_{ii}]
  =
  \sum_{k=1}^n v_{jk}[v_{jk}v_{ii}]
  =
  \sum_{k=1}^n v_{jk}[\varps(v_{jk})\varps(v_{ii})1]
  =
  v_{jj}[1]
  =
  [v_{jj}]
  \text{.}
\end{equation*}
So $\Pol(O_n^+) \tens_{\Pol(\AA_n)} \CC = \spann\{ [1], [v_{11}]\}$.  This finishes the proof of the lemma.
\end{proof}

Before stating the next lemma, we remark that $\alpha$ extends to a trace preserving $*$-auto\-mor\-phism of $L^\infty(O_n^+)$.  Indeed, \cite[Theorem 4.1]{banica-collins1} says that the Haar state $h$ of $\Pol(O_n^+)$ vanishes on words of odd length in the generators $v_{ij}$, from which we deduce that $h \circ \alpha = h$ and hence that $\alpha$ extends as claimed.   We will apply the notation ${_\alpha}\X$ and $\X_\alpha$ to left- and right $L^\infty(O_n^+)$-modules as we did for $\Pol(O_n^+)$-modules before.
\begin{lemma}
  \label{lem:vanishing-dimension}
  We have
    $
    \dim_{L^\infty(O_n^+)}\Tor_p^{\Pol(O_n^+)}\left(L^\infty(O_n^+),{_\alpha}\CC\right)
    =
    0
    \text{.}
    $
\end{lemma}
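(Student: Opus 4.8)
The statement to prove is that $\dim_{L^\infty(O_n^+)}\Tor_p^{\Pol(O_n^+)}\left(L^\infty(O_n^+),{_\alpha}\CC\right) = 0$ for all $p \geq 0$, knowing already that the untwisted version $\dim_{L^\infty(O_n^+)}\Tor_p^{\Pol(O_n^+)}\left(L^\infty(O_n^+),\CC\right) = \beta_p^{(2)}(\hat O_n^+) = 0$ by \cite{thom-collins}. The essential point is that the automorphism $\alpha$ of $\Pol(O_n^+)$, which extends to a trace-preserving $*$-automorphism of $L^\infty(O_n^+)$ as remarked just before the statement, lets one transport the entire Tor computation over to the untwisted case.

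First I would record the functoriality of Tor under twisting by $\alpha$. Since $\alpha$ is a ring automorphism of $\Pol(O_n^+)$, for any right module $\M$ and left module $\X$ there is a natural isomorphism $\Tor_p^{\Pol(O_n^+)}(\M,\X) \cong \Tor_p^{\Pol(O_n^+)}(\M_\alpha, {_\alpha}\X)$: indeed, applying $\alpha$ to a projective resolution $P_\bullet \to \X$ of left modules yields a projective resolution ${_\alpha}P_\bullet \to {_\alpha}\X$, and $\M \tens_{\Pol(O_n^+)} {_\alpha}P_\bullet$ is literally the same complex of abelian groups as $\M_\alpha \tens_{\Pol(O_n^+)} P_\bullet$ via $m \tens \alpha(x) \mapsto m \tens x$. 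Taking homology gives the claimed identification. Applying this with $\M = L^\infty(O_n^+)$ and $\X = {_\alpha}\CC$, and noting that ${_\alpha}({_\alpha}\CC) = \CC$ (since $\alpha^2 = \id$), we obtain
\[
  \Tor_p^{\Pol(O_n^+)}\left(L^\infty(O_n^+),{_\alpha}\CC\right)
  \cong
  \Tor_p^{\Pol(O_n^+)}\left(L^\infty(O_n^+)_\alpha,\CC\right)
\]
as abelian groups — in fact as modules over $L^\infty(O_n^+)$ once we track the module structures carefully, but for the dimension computation it is enough to identify the right-hand side correctly.

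Next I would handle the module structure so that the von Neumann dimensions match up. The twisting on the first variable, $L^\infty(O_n^+)_\alpha$, is an isomorphism of $\Pol(O_n^+)$-$L^\infty(O_n^+)$-bimodules $L^\infty(O_n^+)_\alpha \cong L^\infty(O_n^+)$ once one also twists the outer $L^\infty(O_n^+)$-action by $\alpha$ — concretely, the map $\alpha \colon L^\infty(O_n^+) \to L^\infty(O_n^+)$ itself is such an isomorphism, carrying the right $\Pol(O_n^+)$-module structure twisted by $\alpha$ to the standard one. Therefore $\Tor_p^{\Pol(O_n^+)}(L^\infty(O_n^+)_\alpha,\CC) \cong \Tor_p^{\Pol(O_n^+)}(L^\infty(O_n^+),\CC)_\alpha$ as right $L^\infty(O_n^+)$-modules, i.e. it is the usual Tor-module with its $L^\infty(O_n^+)$-action precomposed with $\alpha$. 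Since $\alpha$ is a trace-preserving automorphism of $L^\infty(O_n^+)$, precomposition with $\alpha$ preserves L\"uck's extended von Neumann dimension (an automorphism preserving the trace induces an isomorphism of the dimension theory). Hence
\[
  \dim_{L^\infty(O_n^+)}\Tor_p^{\Pol(O_n^+)}\left(L^\infty(O_n^+),{_\alpha}\CC\right)
  =
  \dim_{L^\infty(O_n^+)}\Tor_p^{\Pol(O_n^+)}\left(L^\infty(O_n^+),\CC\right)
  =
  \beta_p^{(2)}(\hat O_n^+)
  =
  0,
\]
the last equality being the result of Collins--H\"artel--Thom \cite{thom-collins}.

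**Main obstacle.** None of the steps is deep; the only place that requires genuine care is bookkeeping the left/right module structures through the twisting isomorphisms so that the final object really is the standard $\Tor$-module with its action precomposed by a \emph{trace-preserving} automorphism — and then invoking the (standard but not entirely trivial) fact that such an automorphism preserves $\dim_{L^\infty(O_n^+)}$. If one prefers to avoid even that, an alternative is to observe directly that ${_\alpha}\CC$ is the scalar extension along $\alpha$ and run the base-change/flatness argument as in the surrounding lemmas, but the twisting argument above is the cleanest route.
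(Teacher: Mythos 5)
Your proof is correct and follows essentially the same route as the paper: both shift the twist from the coefficient module $\CC$ to the first variable (the paper via flat base change along $\alpha$ and the identification $L^\infty(O_n^+)\tens_{\Pol(O_n^+)}{_\alpha}\Pol(O_n^+)\cong{_\alpha}L^\infty(O_n^+)$, you by twisting a projective resolution directly), and then conclude because twisting by the trace-preserving automorphism $\alpha$ preserves L\"uck's dimension and $\beta_p^{(2)}(\hat O_n^+)=0$. The only quibble is the harmless slip of calling the resulting Tor-module a \emph{right} $L^\infty(O_n^+)$-module, whereas the residual action of $L^\infty(O_n^+)$ on the first variable makes it a left module, as in the paper.
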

\begin{proof}
  We first perform a flat base change \cite[Proposition 3.2.9]{weibel} via $\alpha$ to obtain an isomorphism of left $L^\infty(O_n^+)$-modules
  \begin{equation*}
      \Tor_p^{\Pol(O_n^+)}(L^\infty(O_n^+), {_\alpha}\CC)
    \simeq
    \Tor_p^{\Pol(O_n^+)}\left( L^\infty(O_n^{+}) \tens_{\Pol(O_n^{+})} {_\alpha}\Pol(O_n^+) , \CC  \right).
  \end{equation*}
  Since $\alpha$ is self-inverse, the map
  \begin{equation*}
    L^\infty(O_n^{+}) \tens_{\Pol(O_n^{+})} {_\alpha}\Pol(O_n^+)
    \ni
    x\tens a
    \mapsto
    \alpha(x)a\in {_\alpha}L^\infty(O_n^+)
  \end{equation*}
  is an isomorphism of left $L^\infty(O_n^+)$-modules and hence
  \begin{align*}
    \dim_{L^\infty(O_n^+)} \Tor_p^{\Pol(O_n^+)}(L^\infty(O_n^+), {_\alpha} \CC)
    & =
      \dim_{L^\infty(O_n^+)} \Tor_p^{\Pol(O_n^+)}\left( {_\alpha}L^\infty(O_n^{+}) , \CC  \right) \\
    & =
      \dim_{L^\infty(O_n^+)} {_\alpha} \hspace{-0.1cm} \left(\Tor_p^{\Pol(O_n^+)}\left( L^\infty(O_n^{+}) , \CC  \right)\right)
      \text{.}
  \end{align*}
  Note that the endo-functor $\X \mapsto {_\alpha}\X$ on the category of left $L^\infty(O_n^+)$-modules maps the class of finitely generated projective modules onto itself, is dimension-preserving on this class and preserves inclusions; hence $\dim_{L^\infty(O_n^+)}(\X) = \dim_{L^\infty(O_n^+)}({_\alpha}\X)$ for all $L^\infty(O_n^+)$-modules $\X$ (cf.~\cite[Section 6.1]{Luck02}). Combined with the previous calculation, this gives
  \begin{align*}
    \dim_{L^\infty(O_n^+)}\Tor_p^{\Pol(O_n^+)}(L^\infty(O_n^+), {_\alpha}\CC)
     =
      \dim_{L^\infty(O_n^+)}\Tor_p^{\Pol(O_n^+)}(L^\infty(O_n^+), \CC) 
     =
      \beta_p^{(2)}(\hat{O}_n^+) 
     =
      0
      \text{,}
  \end{align*}
  where we used the vanishing of the $\L^2$-Betti numbers of $\hat{O}_n^+$ from \cite{thom-collins}.
\end{proof} 

We are now ready to prove that the $\L^2$-Betti numbers of $\hat \AA_n$ vanish.
\begin{proof}[Proof of Theorem \ref{thm-B}]
  We first notice that
  \begin{align*}
    \beta_p^{(2)}(\hat{\AA}_n)
    & \stackrel{\text{def}}{=}
      \dim_{L^\infty(\AA_n)}\Tor_p^{\Pol(\AA_n)}(L^\infty(\AA_n),\CC) \notag \\
    & =
      \dim_{L^\infty(O_n^+)}L^\infty(O_n^+)\tens_{L^\infty(\AA_n)}\Tor_p^{\Pol(\AA_n)}(L^\infty(\AA_n),\CC) \notag \\
    & =
      \dim_{L^\infty(O_n^+)}\Tor_p^{\Pol(\AA_n)}(L^\infty(O_n^+),\CC) \notag \\
    & =
      \dim_{L^\infty(O_n^+)}\Tor_p^{\Pol(O_n^+)}\left(L^\infty(O_n^+),\Pol(O_n^+)\tens_{\Pol(\AA_n)}\CC\right)
      \text{.}
  \end{align*}
  Here the first step follows since the functor $L^\infty(O_n^+)\tens_{L^\infty(\AA_n)} -$ is dimension preserving {\cite[Theorem 6.29 (2)]{Luck02}} and the second step follows since it is exact {\cite[Theorem 6.29 (1)]{Luck02}} and therefore commutes with $\Tor$. 
  The last step follows by applying the flat base change formula (cf.~\cite[Proposition 3.2.9]{weibel}) to the inclusion $\Pol(\AA_n)\subset \Pol(O_n^+)$ which was proven to be (faithfully) flat in \cite{chirvasitu-faithfully-flat}. 
Evoking Lemma \ref{lem:C-plus-C}, we therefore obtain
\begin{align*}
  \beta_p^{(2)}(\hat{\AA}_n)
  =
  \dim_{L^\infty(O_n^+)}\Tor_p^{\Pol(O_n^+)}\left(L^\infty(O_n^+),\CC\right) +
  \dim_{L^\infty(O_n^+)}\Tor_p^{\Pol(O_n^+)}\left(L^\infty(O_n^+),{_\alpha}\CC\right) 
\end{align*}
The first term is the $p$-th $\L^2$-Betti number of $\hat{O}_n^+$, which vanishes by \cite{thom-collins}, and the second term vanishes by Lemma \ref{lem:vanishing-dimension}.  We conclude that $\beta_p^{(2)}(\hat{\AA}_n) = 0$, finishing the proof of the theorem.
\end{proof}

\subsection{The first $\L^2$-Betti number of $\hat \HH_n$}
\label{sec:computation-H_n}

In addition to the results in Section \ref{sec:vanishing-A-n}, the second important ingredient in the proof of Theorem \ref{thm-A} is finding representatives for the classes in 1-cohomology of $\Pol(\HH_n)$ with values in $M(\HH_n)$.  On our  way, we calculate $\beta_1^{(2)}(\hat \HH_n)$ to be 1.

\begin{lemma}
  \label{dim-of-Z}
  The right $L^\infty(\HH_n)$-modules $M(\HH_n)$ and 
  \begin{equation*}
    {\Z}:=\{c\in Z^1(\Pol(\HH_n), M(\HH_n)) \mid \forall i,j: \, c(v_{ij})=0\}
  \end{equation*}
  are isomorphic; in particular the dimension of the latter is 1.
\end{lemma}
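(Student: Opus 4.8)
The plan is to exhibit an explicit isomorphism of right $L^\infty(\HH_n)$-modules $\Phi\colon\Z\to M(\HH_n)$ given by evaluation at the circle generator, namely $\Phi(c):=c(z)$ where $z=\id_{S^1}$ generates $\Pol(S^1)$ inside $\Pol(\HH_n)=\Pol(S^1)\ast\Pol(O_n^+)$. First I would record the relevant module structures: $Z^1(\Pol(\HH_n),M(\HH_n))$ is a right $L^\infty(\HH_n)$-module under $(c\cdot m)(a):=c(a)m$ --- this is again a $1$-cocycle since the left $\Pol(\HH_n)$-action on $M(\HH_n)$ is left multiplication and so commutes with right multiplication by $m$ --- and the conditions $c(v_{ij})=0$ cutting out $\Z$ are visibly stable under this action, making $\Z$ a submodule. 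With these conventions $\Phi$ is plainly right $L^\infty(\HH_n)$-linear, so everything comes down to checking that $\Phi$ is bijective, and then to computing $\dim_{L^\infty(\HH_n)}M(\HH_n)$.

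For injectivity, I would take $c\in\Z$ with $c(z)=0$ and argue that $c$ vanishes on both free factors of $\Pol(\HH_n)$: on $\Pol(S^1)$ because a $1$-cocycle there is determined by its value at $z$ (Lemma \ref{uniquely-determined-lem}, applied to $S^1$ with its one-dimensional fundamental corepresentation $(z)$), and on $\Pol(O_n^+)$ because a $1$-cocycle there is determined by its values on the $v_{ij}$, all of which are $0$ (Lemma \ref{uniquely-determined-lem} again). Regarding $1$-cocycles as derivations into $M(\HH_n)$ with right action given by the counit (the dictionary at the end of Section \ref{sec:cocycles-free-products}), $c$ is then a derivation on $\Pol(S^1)\ast\Pol(O_n^+)$ restricting to $0$ on each factor, so the uniqueness clause in Lemma \ref{free-product-of-derivation-lem} gives $c=0$.

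For surjectivity, given $\xi\in M(\HH_n)$ I would let $c_1\colon\Pol(S^1)\to M(\HH_n)$ be the unique $1$-cocycle with $c_1(z)=\xi$ --- which exists since $\Pol(S^1)=\CC[\ZZ]$ is the group algebra of a free group, concretely $c_1(z^k)=(1+z+\cdots+z^{k-1})\xi$ for $k\geq 0$ extended to negative powers by the cocycle relation --- take $c_2:=0$ on $\Pol(O_n^+)$, and form the free product cocycle $c:=c_1\ast c_2$ via Lemma \ref{free-product-of-derivation-lem}. Then $c$ restricts to $0$ on $\Pol(O_n^+)$, so $c(v_{ij})=0$ for all $i,j$ and $c\in\Z$, while $\Phi(c)=c(z)=\xi$. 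This shows $\Phi$ is an isomorphism. Finally, to get $\dim_{L^\infty(\HH_n)}\Z=1$ I would invoke dimension-preservation of the functor $M(\HH_n)\tens_{L^\infty(\HH_n)}-$ (from \cite{reich01}) applied to the free rank-one module $L^\infty(\HH_n)$, which yields $\dim_{L^\infty(\HH_n)}M(\HH_n)=\dim_{L^\infty(\HH_n)}L^\infty(\HH_n)=1$.

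There is no genuinely difficult step; the result is really a matter of assembling Lemmas \ref{uniquely-determined-lem} and \ref{free-product-of-derivation-lem} correctly. The part that most needs care is bookkeeping: pinning down the two right $L^\infty(\HH_n)$-module structures so that $\Phi$ is visibly linear, and keeping the cocycle--derivation correspondence of Section \ref{sec:cocycles-free-products} straight so that Lemma \ref{free-product-of-derivation-lem} genuinely applies to $1$-cocycles valued in the left $\Pol(\HH_n)$-module $M(\HH_n)$.
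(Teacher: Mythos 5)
Your proof is correct, and it takes a mildly but genuinely different route from the paper's. The paper constructs the map in the opposite direction: it sends $\xi \in M(\HH_n)$ to the free product cocycle $c_\xi \ast 0$, where $c_\xi$ is the \emph{inner} cocycle on $\Pol(S^1)$ implemented by $\xi$. Injectivity then requires knowing that $z-1$ is invertible in $M(S^1)$ (hence in $M(\HH_n)$), and surjectivity requires that the restriction to $\Pol(S^1)$ of any $c \in \Z$ is inner, which the paper gets from $\beta_1^{(2)}(\hat{\ZZ})=0$ via Lemma \ref{lem:ltwo-betti-numbers-and-cohomology}; the final identification $\varphi(\xi)=c$ uses Lemma \ref{uniquely-determined-lem} for $\HH_n$ with fundamental corepresentation $v \oplus z$, just as you do. Your map is evaluation at $z$, and you replace both of these ingredients by the elementary observation that $1$-cocycles on $\CC[\ZZ]$ are freely prescribed by their value at $z$ (your explicit formula $c_1(z^k)=(1+z+\cdots+z^{k-1})\xi$, extended to negative powers by the cocycle identity, does satisfy the cocycle relation, as one checks on the basis $\{z^k\}$), combined with the free product construction and the uniqueness clause of Lemma \ref{free-product-of-derivation-lem} for injectivity. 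The two isomorphisms differ by left multiplication by $z-1$, so they are compatible; your version is more self-contained, avoiding the $\L^2$-Betti number input and the affiliated-operator invertibility of $z-1$, while the paper's version stays closer to the innerness-plus-invertibility machinery it reuses in Proposition \ref{beta-one-of-HHn} and Theorem \ref{thm-A}. Your bookkeeping of the right $L^\infty(\HH_n)$-module structure $(c\cdot m)(a)=c(a)m$ and the final dimension count $\dim_{L^\infty(\HH_n)} M(\HH_n)=1$ via dimension preservation of $M(\HH_n)\tens_{L^\infty(\HH_n)}-$ are both fine.
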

\begin{proof}
 Each $\xi \in M(\HH_n)$ defines an inner cocycle $c_\xi\colon \Pol(S^1)\to M(\HH_n)$ given by $c_\xi(x)=x.\xi-\varps(x)\xi$ and we therefore obtain a map $\varphi\colon M(\HH_n) \to \Z$ given by $\varphi(\xi):=c_{\xi}\ast 0$, where $0$ denotes the zero-cocycle on $\Pol(O_n^+)$. A direct verification shows that $\varphi$ is a morphism of right $L^\infty(\HH_n)$-modules and we now prove that it is injective. If $\varphi(\xi)=0$ then $0=z.\xi-\xi=(z-1).\xi$. However, $z-1$ is invertible in $M(S^1)$, and hence also in the over-ring $M(\HH_n)$, so $\xi=0$,
showing that $\varphi$ is indeed an embedding. On the other hand, since $0=\beta_1^{(2)}(\ZZ)=\dim_{L^\infty(\HH_n)}\Cohom^1(\Pol(S^1),M(\HH_n))$, every cocycle $c\colon \Pol(S^1)\to M(\HH_n)$ is inner (cf.~Lemma \ref{lem:ltwo-betti-numbers-and-cohomology}), and for $c\in \Z$ we therefore obtain a $\xi\in M(\HH)$  which implements $c$ on the subalgebra $\Pol(S^1)$. It is clear that $\varphi(\xi)$ and $c$ agree on the entries  $\{z, v_{ij}\mid i,j=1,\dots, n\}$ of the fundamental unitary corepresentation $v\oplus z$ and by Lemma  \ref{uniquely-determined-lem} we conclude that $\varphi(\xi)=c$, thus proving that $\varphi$ is surjective.
\end{proof}

\begin{prop}
  \label{beta-one-of-HHn}
  For any $n\geq 2$ we have $\beta_1^{(2)}{(\hat{\HH}_n)}=1$.
\end{prop}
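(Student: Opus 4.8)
The plan is to compute $\beta_1^{(2)}(\hat\HH_n)$ via Lemma~\ref{lem:ltwo-betti-numbers-and-cohomology}, taking $\GG = \KK = \HH_n$, so that $\beta_1^{(2)}(\hat\HH_n) = \dim_{L^\infty(\HH_n)}\Cohom^1(\Pol(\HH_n), M(\HH_n))$. Since $\Pol(\HH_n) = \Pol(O_n^+) \ast \Pol(S^1)$, the universal property of free products established in Section~\ref{sec:cocycles-free-products} says that any $1$-cocycle $c$ on $\Pol(\HH_n)$ with values in $M(\HH_n)$ decomposes as $c = c\hspace{-0.1cm}\restriction_{\Pol(O_n^+)} \ast \, c\hspace{-0.1cm}\restriction_{\Pol(S^1)}$. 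The idea is to show that modulo inner cocycles one may always arrange the $\Pol(O_n^+)$-part to vanish, which reduces $\Cohom^1(\Pol(\HH_n), M(\HH_n))$ to the module $\Z$ of Lemma~\ref{dim-of-Z}, whose dimension is $1$.

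First I would observe that restriction gives a well-defined map $\rho\colon \Cohom^1(\Pol(\HH_n), M(\HH_n)) \to \Cohom^1(\Pol(O_n^+), M(\HH_n))$. By Lemma~\ref{lem:ltwo-betti-numbers-and-cohomology} applied to the inclusion $\Pol(O_n^+) \subset \Pol(\HH_n)$, together with the vanishing $\beta_1^{(2)}(\hat O_n^+) = 0$ from \cite{thom-collins}, we get $\dim_{L^\infty(\HH_n)}\Cohom^1(\Pol(O_n^+), M(\HH_n)) = 0$, and hence (again invoking the ``dimension zero iff zero'' part of Lemma~\ref{lem:ltwo-betti-numbers-and-cohomology}, valid since this cohomology is a dual module) $\Cohom^1(\Pol(O_n^+), M(\HH_n)) = 0$. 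Thus every cocycle $c$ on $\Pol(\HH_n)$ restricts to an \emph{inner} cocycle on $\Pol(O_n^+)$: there is $\xi \in M(\HH_n)$ with $c(a) = a.\xi - \varps(a)\xi$ for all $a \in \Pol(O_n^+)$. Subtracting the (globally defined) inner cocycle $b_\xi\colon \Pol(\HH_n) \to M(\HH_n)$, $b_\xi(w) = w.\xi - \varps(w)\xi$, we obtain a cocycle $c' = c - b_\xi$ cohomologous to $c$ which vanishes on all $v_{ij}$, i.e.\ $c' \in \Z$. This shows the natural map $\Z \to \Cohom^1(\Pol(\HH_n), M(\HH_n))$ is surjective.

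It remains to see this map is also injective as a map of right $L^\infty(\HH_n)$-modules, i.e.\ that a cocycle $c' \in \Z$ which is inner on all of $\Pol(\HH_n)$ is already zero. If $c'(w) = w.\xi - \varps(w)\xi$ for all $w$ and $c'(v_{ij}) = 0$ for all $i,j$, then in particular $v_{ij}.\xi = \varps(v_{ij})\xi = \delta_{ij}\xi$; since the matrix $v = (v_{ij})$ is unitary over $\Pol(O_n^+) \subset M(\HH_n)$, this forces $\xi$ to be a fixed vector for the $\Pol(O_n^+)$-action, and then $c'$ restricted to $\Pol(S^1)$ is also implemented by this same $\xi$. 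Unravelling, $c'$ is then the free product of the zero cocycle on $\Pol(O_n^+)$ with the inner cocycle $c_\xi$ on $\Pol(S^1)$, and combining with the isomorphism $\varphi$ of Lemma~\ref{dim-of-Z} (whose inverse sends $c' \in \Z$ to the element of $M(\HH_n)$ implementing it on $\Pol(S^1)$) we conclude that the composite $\Z \xrightarrow{\sim} \Cohom^1 \to$ (something) is compatible, so that in fact $\dim_{L^\infty(\HH_n)}\Cohom^1(\Pol(\HH_n), M(\HH_n)) = \dim_{L^\infty(\HH_n)}\Z = 1$ by Lemma~\ref{dim-of-Z}. Hence $\beta_1^{(2)}(\hat\HH_n) = 1$.

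The main obstacle is bookkeeping rather than a deep difficulty: one must be careful that the correspondence ``cohomology class $\mapsto$ representative in $\Z$'' is well-defined and injective at the level of modules, not just sets, since we need the dimension identity and not merely a bijection. Concretely, the delicate point is verifying that two cocycles in $\Z$ that differ by an inner cocycle on $\Pol(\HH_n)$ must actually be equal --- this is where Lemma~\ref{uniquely-determined-lem} (a cocycle is determined by its values on the $v_{ij}$ and $z$) does the work, since the implementing vector $\xi$ for the difference must satisfy $v_{ij}.\xi = \delta_{ij}\xi$ and the difference then vanishes on the whole fundamental corepresentation $v \oplus z$. Once that is in place the computation is complete.
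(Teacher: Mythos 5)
Your surjectivity argument is fine and is exactly the paper's: vanishing of $\beta_1^{(2)}(\hat O_n^+)$ plus Lemma~\ref{lem:ltwo-betti-numbers-and-cohomology} lets you correct any cocycle by a global inner cocycle so that it lands in $\Z$, and Lemma~\ref{dim-of-Z} gives $\dim_{L^\infty(\HH_n)}\Z = 1$. The gap is in injectivity. You correctly reduce to the situation where $c'\in\Z$ is inner, implemented by some $\xi\in M(\HH_n)$ which is then a fixed vector for the $\Pol(O_n^+)$-action, and you identify $c'=\varphi(\xi)=0\ast c_\xi$. But at that point you stop: you never show $\xi=0$, which is what injectivity actually requires. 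Since $\varphi$ is injective (Lemma~\ref{dim-of-Z}), $c'=\varphi(\xi)$ is zero \emph{if and only if} $\xi=0$; in particular your closing claim that the difference of two such cocycles ``vanishes on the whole fundamental corepresentation $v\oplus z$'' is unjustified, because $b_\xi(z)=(z-1)\xi$ has no reason to vanish unless $\xi$ does. So your argument, as written, would prove injectivity for any free product $\Pol(\GG)\ast\Pol(S^1)$ whatsoever, which is false (take $\GG$ trivial: then every vector is $\Pol(\GG)$-fixed, nonzero inner cocycles lie in $\Z$, and indeed $\beta_1^{(2)}(\ZZ)=0$, not $1$). The statement that $M(\HH_n)$ has no nonzero $\Pol(O_n^+)$-fixed vectors is exactly where the specific structure of $O_n^+$ must enter, and it is not bookkeeping.

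The paper closes this gap by exhibiting an element $x\in\Pol(O_n^+)\cap\ker(\varps)$ that is invertible in $M(\HH_n)$, so that $x\xi=\varps(x)\xi=0$ forces $\xi=0$. This requires a case split: for $n\geq 3$ the quantum group $\hat O_n^+$ is non-amenable, so Lemma~\ref{coamenable-lem} produces an element of $\ker(\varps)$ invertible already in $C(O_n^+)_{\red}$; for $n=2$, where $\hat O_2^+$ is amenable and Lemma~\ref{coamenable-lem} gives nothing, one instead uses that $\Pol(O_2^+)$ is a domain (Appendix, or \cite{brown-goodearl}) together with \cite[Theorem 3.4]{applications-of-foelner}, which yields a skew field between $\Pol(O_2^+)$ and $M(O_2^+)$, so that every nonzero element of $\Pol(O_2^+)$ --- in particular one in $\ker(\varps)$ --- is invertible in $M(O_2^+)\subset M(\HH_2)$. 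You need to supply an argument of this kind (or some other proof that the only $\Pol(O_n^+)$-fixed vector in $M(\HH_n)$ is $0$) before the injectivity of $\Z\to\Cohom^1(\Pol(\HH_n),M(\HH_n))$, and hence the computation $\beta_1^{(2)}(\hat\HH_n)=1$, is established.
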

\begin{proof}
  
  By Lemma \ref{dim-of-Z}, it suffices to show that the natural quotient map from 1-cocycles to 1-cohomology induces an isomorphism $ \kappa\colon \Z \to \Cohom^1(\Pol(\HH_n), M(\HH_n))$ of $L^\infty(\HH_n)$-modules.  Since $\beta_1^{(2)}(\hat{O}_n^+)=0$ \cite{vergnioux-paths-in-cayley}, Lemma \ref{lem:ltwo-betti-numbers-and-cohomology} says that any cocycle $c\colon \Pol(\HH_n) \to M(\HH_n)$ is equivalent to a cocycle vanishing on $\Pol(O_n^+)$ and hence $\kappa$ is surjective. To prove injectivity, assume that $c\in \Z$ is inner, say, implemented by a vector $\zeta\in M(\HH_n)$ which then  satisfies  $x\zeta=\varps(x)\zeta$ for all $x\in \Pol(O_n^+)$ since $c$ is assumed to vanish on $\Pol(O_n^+)$. If $n\geq 3$, the discrete quantum group $\hat{O}_n^+$ is non-amenable, so by  Lemma \ref{coamenable-lem} there exists $y_0\in \Pol(O_n^+)\cap \ker(\varps)$ which is invertible as an operator in $C(O_n^+)_\red$. In particular, $y_0$ is invertible in $M(O_n^+)$, thus also in the over-ring $M(\HH_n)$, and the relation $y_0\zeta=\varps(y_0)\zeta=0$ therefore forces $\zeta=0$; whence $\kappa$ is injective. If $n=2$, then $\hat{O}_2^+$ is amenable  and  $\Pol(O_2^+)$ is a domain (cf. Chapter I.1 in \cite{brown-goodearl} or the Appendix). By \cite[Theorem 3.4]{applications-of-foelner}, this implies the existence of a skew field  between $\Pol(O_2^+)$ and $M(O_2^+)$ and therefore \emph{any} non-zero element in $\Pol(O_2^+)$ is invertible in $M(O_2^+)$, and hence also in the over-ring $M(\HH_2)$. Since $x\zeta=\varps(x)\zeta$ for any $x\in \Pol(O_2^+)$, by choosing $x$ as a non-zero element in $\Pol(O_2^+)\cap \ker(\varps)$ we conclude again that $\zeta=0$.
\end{proof}
\begin{rem}
The higher cohomology of  free products is well understood, and in the case of $\Pol(\HH_n)$ one has
\[
\Cohom^p(\Pol(\HH_n), M(\HH_n)) \simeq \Cohom^p(\Pol(O_n^+), M(\HH_n)) \oplus \Cohom^p(\Pol(S^1),M(\HH_n)), \quad p\geq 2.
\]
For a proof, cf.~\cite{bichon-cohom-dim}. Since the $\ell^2$-Betti numbers of $\hat{O}_n^+$ and $\ZZ$ vanish in degrees higher than $1$ this implies that $\beta_p^{(2)}(\hat{\HH}_n)=0$ for $p\geq 2$. Note also that $\beta_0^{(2)}(\hat{\HH}_n)=0$ by \cite{beta-zero}.
\end{rem}

\subsection{The first $\L^2$-Betti number of $\hat{U}_n^+$}
\label{sec:computation-U_n}

We are now ready to prove our main Theorem \ref{thm-A}.  Let us fix the following short calculation for later use.
\begin{lemma}
  \label{lem:relate-cocycles}
  Let $\X$ be a left $\Pol(\HH_n)$-module and let $c \in Z^1(\Pol(\HH_n), \X)$ be a 1-cocycle that is trivial on $\Pol(O_n^+)$.  Then $c(u_{ij}) = \delta_{ij} c(z)$ for all $i,j \in \{1, \dotsc, n\}$.
  In particular, if $c(v_{ij}) = c(u_{ij}) = 0$ for all $i,j \in \{1, \dotsc, n\}$ then $c = 0$.
\end{lemma}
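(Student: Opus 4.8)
The plan is to exploit the explicit realization $u_{ij}=zv_{ij}$ of the generators of $\Pol(U_n^+)\subset\Pol(\HH_n)$ recorded in Section~\ref{sec:universal-quantum-groups}, together with the cocycle identity $c(ab)=a.c(b)+c(a)\varps(b)$. First I would simply compute, for arbitrary $i,j\in\{1,\dots,n\}$,
\[
c(u_{ij})=c(zv_{ij})=z.c(v_{ij})+c(z)\varps(v_{ij}).
\]
Since $c$ is assumed trivial on $\Pol(O_n^+)$ we have $c(v_{ij})=0$, and since $\varps(v_{ij})=\delta_{ij}$ the expression collapses to $c(u_{ij})=\delta_{ij}c(z)$, which is the first assertion. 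This step is entirely routine.

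For the ``in particular'' clause, suppose in addition that $c(u_{ij})=0$ for all $i,j$. Taking $i=j$ in the formula just obtained yields $c(z)=0$, so together with the hypothesis $c(v_{ij})=0$ we know that $c$ vanishes on all $v_{ij}$ and on $z$. It then remains to see that $c$ also vanishes on $z^*$: using $c(1)=0$ and the relation $z^*z=1$ one gets
\[
0=c(z^*z)=z^*.c(z)+c(z^*)\varps(z)=c(z^*),
\]
because $c(z)=0$ and $\varps(z)=1$. Since $\Pol(\HH_n)=\Pol(S^1)\ast\Pol(O_n^+)$ is generated as a unital algebra by $\{v_{ij}\mid i,j\}\cup\{z,z^*\}$ (recall $v_{ij}^*=v_{ij}$), the cocycle relation shows that $c$ is determined by its values on this generating set, and as all of them are $0$ we conclude $c=0$. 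Alternatively, one may invoke Lemma~\ref{uniquely-determined-lem} applied to the fundamental unitary corepresentation $v\oplus(z)$ of the compact matrix quantum group $\HH_n$, whose matrix entries are the $v_{ij}$, the element $z$, and zeros.

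There is no genuine obstacle in this argument; the only point that warrants a moment's care is verifying $c(z^*)=0$, and this is dealt with cleanly by using the relation $z^*z=1$ rather than $zz^*=1$, which sidesteps any need to argue that the action of $z$ on $\X$ is invertible.
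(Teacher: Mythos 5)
Your proof is correct and follows essentially the same route as the paper: the identity $c(u_{ij})=c(zv_{ij})=z.c(v_{ij})+c(z)\varps(v_{ij})=\delta_{ij}c(z)$ is exactly the paper's computation, and the paper concludes the ``in particular'' clause by applying Lemma~\ref{uniquely-determined-lem} to the fundamental corepresentation $v\oplus z$ of $\HH_n$, which is the alternative you yourself mention (your direct treatment of $c(z^*)$ via $z^*z=1$ plus the generating-set argument is just an unwinding of that lemma). The only cosmetic difference is that the paper derives vanishing on $\Pol(O_n^+)$ from $c(v_{ij})=0$ alone by a first application of Lemma~\ref{uniquely-determined-lem}, whereas you invoke the standing triviality hypothesis; in the intended application these coincide.
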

\begin{proof}
  Let $c \in Z^1(\Pol(\HH_n), \X)$ be trivial on $\Pol(O_n^+)$.  For all $i,j \in \{1, \dotsc, n\}$ we then have
  \begin{equation*}
    c(u_{ij})
    =
    c(z v_{ij})
    =
    z c(v_{ij}) + c(z) \varps(v_{ij})
    =
    \delta_{ij} c(z)
    \text{.}
  \end{equation*}
  If we now assume that $c(v_{ij}) = c(u_{ij}) = 0$ for all $i,j \in \{1, \dotsc, n\}$, then $c$ vanishes on $\Pol(O_n^+)$ by Lemma \ref{uniquely-determined-lem}.  Further, we obtain $c(z) = c(u_{11}) = 0$ and hence another application of Lemma \ref{uniquely-determined-lem} shows that $c = 0$.  
\end{proof}

The next lemma provides  an extension result making it possible to compare the first $\L^2$-Betti number of $\hat U_n^+$ with that of $\hat \HH_n$.
\begin{lem}
  \label{extension-lem}
  Let $c\in Z^1(\Pol(U_n^+), M(\HH_n))$.  If there exists some $\xi \in M(\HH_n)$ such that for all $i,j \in \{1, \dotsc, n\}$ we have $c(u_{ij})=\varps(u_{ij})\xi$, then there exists $\tilde{c} \in Z^1(\Pol(\HH_n), M(\HH_n))$ that extends $c$.
\end{lem}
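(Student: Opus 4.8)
The plan is to build the extension $\tilde{c}$ as a free product cocycle. Recall from Section \ref{sec:universal-quantum-groups} that $\Pol(U_n^+)\subset\Pol(\HH_n)=\Pol(O_n^+)\ast\Pol(S^1)$ via $u_{ij}\mapsto zv_{ij}$, and from the end of Section \ref{sec:cocycles-free-products} (via Lemma \ref{free-product-of-derivation-lem}) that any pair of $1$-cocycles on $\Pol(O_n^+)$ and on $\Pol(S^1)$ with values in a left $\Pol(\HH_n)$-module extends uniquely to a $1$-cocycle on $\Pol(\HH_n)$. I would take the cocycle on $\Pol(O_n^+)$ to be the zero cocycle and set $\tilde{c}:=0\ast c_2$, where $c_2\colon\Pol(S^1)\to M(\HH_n)$ is to be chosen. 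Since $\tilde{c}$ then vanishes on $\Pol(O_n^+)$, Lemma \ref{lem:relate-cocycles} gives $\tilde{c}(u_{ij})=\delta_{ij}\,\tilde{c}(z)=\delta_{ij}\,c_2(z)$, so to make $\tilde{c}$ restrict to $c$ on the generators $u_{ij}$ I need precisely $c_2(z)=\xi$; this is exactly what the hypothesis $c(u_{ij})=\varps(u_{ij})\xi=\delta_{ij}\xi$ is designed to allow.

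The only point requiring an argument is the existence of a $1$-cocycle $c_2$ on $\Pol(S^1)$ with prescribed value $c_2(z)=\xi$. One way is direct: $\Pol(S^1)\cong\CC[\ZZ]$ is generated by $z$ with only the relation $zz^{-1}=1$, so a value on $z$ may be prescribed freely and extended along the powers $z^m$. A slicker route, which I would prefer, is to use that the function $z-1$ vanishes only at a single point of $S^1$ and is therefore invertible in $M(S^1)$, hence in the overring $M(\HH_n)$ (exactly as in the proof of Lemma \ref{dim-of-Z}); then the \emph{inner} cocycle $c_2$ on $\Pol(S^1)$ implemented by $\eta:=(z-1)^{-1}\xi\in M(\HH_n)$ satisfies $c_2(z)=z.\eta-\varps(z)\eta=(z-1)\eta=\xi$, with no further verification needed since inner cocycles are automatically cocycles.

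With $\tilde{c}=0\ast c_2\in Z^1(\Pol(\HH_n),M(\HH_n))$ in hand, it remains to check $\tilde{c}\restriction_{\Pol(U_n^+)}=c$. Both sides are $1$-cocycles on $\Pol(U_n^+)$ with values in $M(\HH_n)$, regarded as a $\Pol(U_n^+)$-module via $\Pol(U_n^+)\subset\Pol(\HH_n)\subset M(\HH_n)$ (the counit and module structure on $\Pol(U_n^+)$ being restrictions of those on $\Pol(\HH_n)$), and by the computation above $\tilde{c}(u_{ij})=\delta_{ij}\xi=c(u_{ij})$ for all $i,j$. Since $U_n^+$ is a compact matrix quantum group with fundamental corepresentation $u$, Lemma \ref{uniquely-determined-lem} then forces the two cocycles to agree, so $\tilde{c}$ is the desired extension. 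I do not expect a genuine obstacle here; the only subtlety worth flagging is that one cannot simply take the inner cocycle on all of $\Pol(\HH_n)$ implemented by $\eta$, because that would not vanish on $\Pol(O_n^+)$ and hence would not match $c$ on the $u_{ij}$ — the free product construction is precisely what repairs this.
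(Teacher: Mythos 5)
Your proposal is correct and takes essentially the same route as the paper: there, too, $\tilde c$ is the free product of the zero cocycle on $\Pol(O_n^+)$ with a cocycle on $\Pol(S^1)$ sending $z$ to $\xi$, and the conclusion follows from Lemmas \ref{lem:relate-cocycles} and \ref{uniquely-determined-lem}. The only minor difference is that the paper obtains the $\Pol(S^1)$-cocycle from the freeness of $\ZZ$ --- your first option --- rather than as the inner cocycle implemented by $(z-1)^{-1}\xi$; both constructions are valid.
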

\begin{proof}
  Assume that $c(u_{ij}) = \varps(u_{ij})\xi$ for all $i, j \in \{1, \dotsc, n\}$.  Since $\ZZ$ is free, the formula $c_1(z) := \xi$ defines a unique cocycle on $\ZZ$ with values in $M(\HH_n)$, which extends to a cocycle \mbox{$c_1 \colon \Pol(S^1) = \CC[\ZZ] \to M(\HH_n)$} by linearity.  Denote by  $0$ the zero-cocycle on $\Pol(O_n^+)$, and consider  the free product cocycle $\tilde{c} := c_1 \ast 0$ as explained in Section \ref{sec:cocycles-free-products}.  Lemma \ref{lem:relate-cocycles} shows that
  \begin{equation*}
    \tilde{c}(u_{ij})
    =
    \delta_{ij} \tilde{c}(z)
    =
    \varps(u_{ij}) \xi
    =
    c(u_{ij})
    \text{,}
  \end{equation*}
  for all $i,j \in \{1, \dotsc, n\}$.  By Lemma \ref{uniquely-determined-lem},  a cocycle on $\Pol(U_n^+)$ is uniquely determined on the matrix coefficients $u_{ij}$, $i,j \in \{1,\dots, n\}$, so  $\tilde c$ is indeed an extension of $c$,  and the proof of the lemma is complete.
\end{proof}

We now turn to the proof of our main result.
\begin{proof}[Proof of Theorem \ref{thm-A}]
  Applying Lemma \ref{lem:ltwo-betti-numbers-and-cohomology} to the inclusion $\Pol(U_n^+) \subset \Pol(\HH_n)$, we obtain
  \begin{equation*}
    \beta_1^{(2)} (\hat{U}_n^{+})
    =
    \dim_{L^\infty(\HH_n)} \Cohom^1(\Pol(U_n^+), M(\HH_n))
    \text{.}
  \end{equation*}
  Consider the set
  \begin{equation*}
    \Z
    :=
    \{ c \in Z^1(\Pol(\HH_n), M(\HH_n)) \mid c(v_{ij})=0 \}
    \text{,}
  \end{equation*}
  and the composition
  \begin{equation*}
    \alpha:
    \Z
    \stackrel{\iota}{\hookrightarrow}
    Z^1(\Pol(\HH_n), M(\HH_n))
    \stackrel{\mathrm{res}}{\longrightarrow}
    Z^1(\Pol(U_n^+), M(\HH_n))
    \stackrel{\pi}{\twoheadrightarrow}
    \Cohom^1(\Pol(U_n^+), M(\HH_n))
    \text{.}
  \end{equation*}
  We show that $\alpha$ is an isomorphism of $L^\infty(\HH_n)$-modules, which proves Theorem \ref{thm-A} thanks to Lemma \ref{dim-of-Z}.
Let us first prove injectivity of $\alpha$.  Assuming that $\alpha(c) = 0$ for some $c \in \Z$ amounts to saying that $c$ is inner on $\Pol(U_n^+)$ --- say implemented by a vector $\zeta \in M(\HH_n)$.  Since $c(v_{ij}) = 0$ for all $i,j \in \{1, \dotsc, n\}$ and $v_{ij} v_{kl}=u_{ij}^* u_{kl}\in \Pol(U_n^+) $ for all  for all $i,j,k,l \in \{1, \dotsc, n\}$, we therefore get
\[  
0= c(v_{ij} v_{kl})= v_{ij} v_{kl}\zeta -\varps(v_{ij} v_{kl})\zeta, 
\]  
 and hence $a \zeta = \varps(a) \zeta$ for all $a\in \Pol(\AA_n)$. \\

\noindent  \textit{Case 1}.  If $n=2$, then $\hat{\AA}_2$ is amenable by \cite[Corollary 4.1 \& 4.2]{banica-sym}.  Further, since $\Pol(\AA_2) \subset \Pol(O_2^+)$ and the latter is known to be a domain (see Chapter I.1 of \cite{brown-goodearl} or the Appendix),  $\Pol(\AA_2)$ is also a domain, and   by \cite[Theorem 3.4]{applications-of-foelner}, this implies the existence of a skew field between $\Pol(\AA_2)$ and $M(\AA_2)$. Hence any non-zero element in $\Pol(\AA_2)$ is invertible in $M(\AA_2)$ and thus in the over-ring $M(\HH_2)$ as well.  For every $a\in \Pol(\AA_2) \cap \ker(\varps)$ we have
  \begin{equation*}
    \zeta
    =
    a^{-1} a \zeta
    =
    a^{-1} \varps(a) \zeta
    =
    0
    \text{.}
  \end{equation*}
  This shows $c\hspace{-0.1cm}\restriction_{\Pol(U_n^+)} = 0$ and Lemma~\ref{lem:relate-cocycles} now finishes the proof of injectivity of $\alpha$ when $n = 2$. \\

\noindent  \textit{Case 2}.  If $n\geq 3$, then $\hat{\AA}_n$ is non-amenable, and by Lemma \ref{coamenable-lem}
  this means that there exists $x \in \Pol(\AA_n)\cap \ker(\varps)$ which is invertible as an operator in $C(\AA_n)_\red$.  This element is therefore also invertible in the bigger  $C^*$-algebra $C(\HH_n)_\red$ and thus in $M(\HH_n)$ as well.  The proof is now finished in the same way as in Case 1:  we obtain
  \begin{equation*}
    \zeta
    =
    x^{-1} x \zeta
    =
    x^{-1} \varps(x) \zeta
    =
    0
    \text{.}
  \end{equation*}
  We therefore have $c \hspace{-0.1cm}\restriction_{\Pol(U_n^+)} = 0$ and Lemma \ref{lem:relate-cocycles} finishes the proof of injectivity of $\alpha$ for $n \geq 3$.

  It remains to show that $\alpha$ is surjective.  Let $[c] \in \Cohom^1(\Pol(U_n^+), M(\HH_n))$ be given.  By Theorem \ref{thm-B}, we have $\beta_1^{(2)}(\hat{\AA}_n) = 0$, which  implies that $\Cohom^1(\Pol(\AA_n), M(\HH_n)) = 0$ by Lemma \ref{lem:ltwo-betti-numbers-and-cohomology}.  So we may assume that $c$ vanishes on $\Pol(\AA_n)$.  The formula \eqref{unitary-eq-2} now gives
  \begin{equation*}
    c(u_{ij})
    =
    \sum_{k=1}^n c(u_{1k}u_{1k}^* u_{ij})
    =
    \sum_{k=1}^n
    u_{1k}\underbrace{c(u_{1k}^*u_{ij})}_{=0} +
    c(u_{1k})\varps(u_{1k}^*u_{ij})
    =
    \varps(u_{ij}) c(u_{11})
    \text{,}
  \end{equation*}
and we may therefore apply Lemma \ref{extension-lem} and find an extension $\tilde{c} \colon \Pol(\HH_n) \to M(\HH_n)$ of $c$.  Since $\beta_1^{(2)}(\hat O_n^+) = 0$,  Lemma \ref{lem:ltwo-betti-numbers-and-cohomology} shows that $\tilde{c} \in Z^1(\Pol(\HH_n),M(\HH_n))$ is cohomologous to a cocycle $\tilde{c}'\in \Z$ which, by construction, satisfies $\alpha(\tilde{c}')=[c]$, thus showing surjectivity of $\alpha$ and finishing the proof of Theorem \ref{thm-A}.
\end{proof}

\begin{rem}
Since  $\Pol(O_n^+)$ and $\Pol(S^1)$ are of cohomological dimension  3 and 1, respectively,   \cite[Corollary 2.5 ]{bergman}  gives that  $\Pol(\HH_n)$ has cohomological dimension 3, and combining \cite[Theorem 2.1]{chirvasitu-faithfully-flat} with \cite[Corollar 1.8]{schneider} we furthermore have that $\Pol(\HH_n)$ is projective as a $\Pol(U_n^+)$-module. From this we deduce that $\Pol(U_n^+)$ has cohomological dimension at most 3, and equality follows since the subring $\Pol(\AA_n)$ is known to be of cohomological dimension 3 \cite[Theorem 6.5]{bichon-gerstenhaber}. This implies  that $\beta_p^{(2)}(\hat{U}_n^+)=0$ for $p\geq 4$. Note also that $\beta_0^{(2)}(\hat{U}_n^+)=0$ by \cite{beta-zero}.
\end{rem}

\begin{added}
The results in the present paper have subsequently been generalized by Julien Bichon and the authors in \cite{BKR16} to also include a computation of $\beta_2^{(2)}(\hat{U}_n^+)$ and $\beta_3^{(2)}(\hat{U}_n^+)$ which both turn out to be zero. For an even more general approach to these results,  the reader is referred to \cite[Theorem 5.2]{KRVV17} which furthermore contains a number of additional computations of $\ell^2$-Betti numbers for discrete quantum groups.

\end{added}

\appendix
\section{}\label{appendix-a}
In  this section we provide a proof of the fact that $\Pol(O_2^+)$ is a domain; i.e.~that it has no non-trivial zero-divisors.  This fact can be deduced from \cite[Chapter I.1]{brown-goodearl} using the well known identification of $\Pol(O_2^+)$ with $\Pol(SU_{-1}(2))$ (cf.~\cite[Proposition 5 \& 6]{banica-unitary}) and the fact that the underlying rings of $\Pol(SU_{-1}(2))$ and of $\Pol(SL_{-1}(2))$ are isomorphic.  For the benefit of the reader, we give a short proof in operator algebraic terminology, only using the identification $\Pol(O_2^+) \cong \Pol(SU_{-1}(2))$.

We denote by $\alpha$ and $\gamma$ the canonical generators of $\Pol(SU_{-1}(2))$, and recall \cite{woronowicz}   that the defining relations are
\begin{center}
\begin{tabular}{rl}
\begin{minipage}{5cm}
\begin{eqnarray*}
\alpha^*\alpha + \gamma^*\gamma &= &1\\  
\alpha\alpha^* + \gamma\gamma^*&= &1\\
\gamma\gamma^*-\gamma^*\gamma &= & 0
\end{eqnarray*}
\end{minipage}
&
\begin{minipage}{5cm}
\begin{eqnarray*}
\alpha\gamma + \gamma\alpha &= & 0\\
\alpha\gamma^*+ \gamma^*\alpha &= &0\\  
                 & 
\end{eqnarray*}
\end{minipage}
\end{tabular}
\end{center}
Note that this implies that $\alpha^*\alpha=\alpha\alpha^*$ and that $\gamma\gamma^*$ is a central element.
In the following we use the convention that $\alpha^{i}=(\alpha^*)^{-i}$ for $i<0$ and $x^0=1$ for $x\neq 0$. 
By \cite{woronowicz} the set
\[
\B:=\{\alpha^i \gamma^{j}\gamma^{*k} \mid i\in \ZZ,j,k\in \NN_0\}
\]
constitutes a linear basis for $\Pol(SU_{-1}(2))$. For a non-zero element $x=\sum \lambda_{ijk}\alpha^i \gamma^{j}\gamma^{*k} \in\Pol(SU_{-1}(2))$ we define its degrees with respect to the basis:
\begin{align*}
\deg_\alpha(x)&:=\max\{i\in \ZZ\mid \exists j,k\in\NN_0: \lambda_{ijk}\neq 0 \};\\
\deg_{\gamma,\gamma^*}(x) &:=\max\{p\in \NN \mid \exists i\in \ZZ, k,l\in \NN_0 : \lambda_{ijk}\neq 0 \ \text{and} \ p=j+k\}.
\end{align*}

\begin{prop}
The ring $\Pol(SU_{-1}(2))$ is a domain.
\end{prop}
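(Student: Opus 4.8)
The plan is to exploit the two filtration-type degree functions $\deg_\alpha$ and $\deg_{\gamma,\gamma^*}$ introduced just above the statement, and to show that they are multiplicative in the appropriate sense, so that the leading term of a product of two non-zero elements is non-zero. Concretely, I would first record the key commutation consequences of the defining relations: $\alpha$ and $\alpha^*$ commute (so powers $\alpha^i$, $i \in \ZZ$, are unambiguous), $\gamma$ and $\gamma^*$ commute, $\gamma\gamma^*$ is central, and $\alpha\gamma = -\gamma\alpha$, $\alpha\gamma^* = -\gamma^*\alpha$ (hence $\alpha^i$ commutes with $\gamma^j\gamma^{*k}$ up to the sign $(-1)^{i(j+k)}$). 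Using these, for basis elements $b_1 = \alpha^{i_1}\gamma^{j_1}\gamma^{*k_1}$ and $b_2 = \alpha^{i_2}\gamma^{j_2}\gamma^{*k_2}$ one gets $b_1 b_2 = \pm\,\alpha^{i_1+i_2}\gamma^{j_1+j_2}\gamma^{*k_1+k_2}$, which is again (a scalar multiple of) a basis vector; the only subtlety is when $i_1$ and $i_2$ have opposite signs, in which case $\alpha^{i_1}\alpha^{i_2}$ must be rewritten using $\alpha^*\alpha = \alpha\alpha^* = 1 - \gamma^*\gamma$. This is the one place where lower-order $\gamma,\gamma^*$-terms are introduced, and handling it cleanly is the main technical point.

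To deal with that subtlety I would order the analysis by $\deg_{\gamma,\gamma^*}$ rather than by $\deg_\alpha$. Given non-zero $x, y \in \Pol(SU_{-1}(2))$, write $x = x_0 + (\text{lower } \deg_{\gamma,\gamma^*})$ where $x_0$ collects the terms with $j+k = \deg_{\gamma,\gamma^*}(x) =: p$, and similarly $y = y_0 + (\dots)$ with top degree $q$. Since $\gamma\gamma^* = \gamma^*\gamma$ is central and the relation $\alpha^*\alpha = 1 - \gamma^*\gamma$ only ever lowers the $\gamma,\gamma^*$-degree, the product $xy$ has $\gamma,\gamma^*$-degree at most $p+q$, and its degree-$(p+q)$ part equals the degree-$(p+q)$ part of $x_0 y_0$ computed by formally multiplying out and moving all $\alpha$'s to the left while picking up only signs (no $1-\gamma^*\gamma$ substitutions contribute at the top degree). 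Thus it suffices to show $x_0 y_0 \neq 0$, i.e. to prove the claim when $x$ and $y$ are homogeneous of pure $\gamma,\gamma^*$-degree.

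For the homogeneous case I would fix the top $\gamma,\gamma^*$-degree $p$ and organize $x_0$ as $\sum_{j+k=p} \big(\sum_{i} \lambda_{ijk}\alpha^i\big)\gamma^j\gamma^{*k}$. Since $\alpha,\alpha^*$ generate a commutative polynomial ring in the two variables $\alpha,\alpha^*$ subject to no relation other than $\alpha\alpha^* = \alpha^*\alpha$ at this level (the only relation $\alpha^*\alpha = 1-\gamma^*\gamma$ drops the degree), each coefficient $P_{jk}(\alpha,\alpha^*) := \sum_i \lambda_{ijk}\alpha^i$ lives in the Laurent-polynomial-type domain $\CC[\alpha,\alpha^*]$, and multiplying $x_0 y_0$ and collecting the $(j+k)$-top part reduces, after tracking signs and the central factors $(\gamma^*\gamma)$, to multiplication of such Laurent polynomials together with multiplication in the (commutative, hence domain) polynomial ring $\CC[\gamma,\gamma^*]$ localized/quotiented appropriately — in any case an integral domain. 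Taking leading terms with respect to $\deg_\alpha$ on top of $\deg_{\gamma,\gamma^*}$ then shows the product of the two leading monomials is a non-zero scalar multiple of a basis element, so $x_0 y_0 \neq 0$ and hence $xy \neq 0$. The main obstacle, as noted, is bookkeeping the sign $(-1)^{i(j+k)}$ and verifying that the $\alpha^*\alpha = 1 - \gamma^*\gamma$ relation genuinely never interferes with the top $\gamma,\gamma^*$-degree term; once that is pinned down the argument is a standard leading-term computation.
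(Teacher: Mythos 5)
There is a genuine gap, and it sits exactly at the point you flag as the ``main technical point'': the filtration by $\deg_{\gamma,\gamma^*}$ is not compatible with multiplication. You assert that the relation $\alpha^*\alpha=\alpha\alpha^*=1-\gamma\gamma^*$ ``only ever lowers the $\gamma,\gamma^*$-degree'', so that $\deg_{\gamma,\gamma^*}(xy)\leq p+q$ and the degree-$(p+q)$ part of $xy$ is the degree-$(p+q)$ part of $x_0y_0$. In fact the substitution \emph{raises} the $\gamma,\gamma^*$-degree: it trades two $\alpha$'s for the degree-two element $\gamma\gamma^*$. Already $x=\alpha$, $y=\alpha^*$ (so $p=q=0$) gives $xy=1-\gamma\gamma^*$ of degree $2>p+q$. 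Worse, cross terms coming from the \emph{lower} $\gamma,\gamma^*$-degree parts of $x$ and $y$ can contribute in degree $p+q$ and cancel the contribution of $x_0y_0$: take $x=\alpha^2+\gamma\gamma^*$ and $y=\alpha^{*2}-\gamma\gamma^*$, so $x_0y_0=-\gamma^2\gamma^{*2}\neq 0$, while $xy=1-2\gamma\gamma^*+(\alpha^{*2}-\alpha^2)\gamma\gamma^*$ has no component of degree $p+q=4$ at all. Hence ``$x_0y_0\neq 0$ implies $xy\neq 0$'' is unjustified (and the displayed mechanism for it is false), so the reduction to homogeneous elements collapses. The homogeneous step itself also leans on the incorrect statement that $\alpha,\alpha^*$ generate a (Laurent-)polynomial ring subject only to commutativity: inside $\Pol(SU_{-1}(2))$ one has $\alpha\alpha^*=1-\gamma\gamma^*$, so the span of the $\alpha^i$, $i\in\ZZ$, is not even closed under multiplication.

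The repair is to order the two degrees the other way round, which is what the paper does. The rewriting rule never disturbs the $\alpha$-exponent: $\alpha^i\alpha^j=\alpha^{i+j}p_{i,j}(\gamma,\gamma^*)$ with $p_{i,j}$ a power of $1-\gamma\gamma^*$ (the paper's Claim 1), so in a product $xy$ all terms coming from $\alpha$-degrees $(i,l)$ with $i+l<i_0+l_0$ stay in $\alpha$-degree $<i_0+l_0$, and the top $\alpha$-degree part of $xy$ is exactly $\alpha^{i_0}\alpha^{l_0}$ times a sign-twisted product of the two leading $\gamma,\gamma^*$-polynomials, with no interference possible. Two further ingredients, absent from your sketch, are then needed: that $\alpha^i$ and $\alpha\alpha^*=1-\gamma\gamma^*$ are not (left) zero-divisors --- this is the paper's Claim 2, and it is proved precisely by a $\deg_{\gamma,\gamma^*}$ comparison, which is the legitimate use of that degree --- and that $p\mapsto p(\gamma,\gamma^*)$ embeds the domain $\CC[X,Y]$ into $\Pol(SU_{-1}(2))$ because $\B$ is a linear basis. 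With these, the vanishing of the top $\alpha$-degree part forces one of the two leading polynomials to vanish, a contradiction; with your order of the filtrations the leading-term argument does not go through.
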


\setcounter{claim-counter}{0}

\begin{proof}
We first prove the following claim: 

\begin{claim}\label{claim1}
For $i,j\in \ZZ$ there exists a polynomial $p_{i,j}\in \CC[X,Y]$ such that $\alpha^i\alpha^j= \alpha^{i+j}p_{i,j}(\gamma,\gamma^*)$
\end{claim}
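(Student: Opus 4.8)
The plan is a direct computation, organised by the signs of $i$ and $j$. The only structural facts needed are that $\alpha$ is normal, that $\gamma$ is normal, and that $\gamma\gamma^*$ is central, all of which are recorded in the discussion of the defining relations. The crucial ingredient is the \emph{contraction identity}
\[
\alpha\alpha^* = \alpha^*\alpha = 1 - \gamma\gamma^* = 1 - \gamma^*\gamma \text{,}
\]
obtained by combining $\alpha\alpha^* + \gamma\gamma^* = 1$ and $\alpha^*\alpha + \gamma^*\gamma = 1$ with normality; its right-hand side lies in the centre and is a polynomial in $\gamma$ and $\gamma^*$.

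First I would dispose of the case where $i$ and $j$ have the same sign, or one of them vanishes: then $\alpha^i\alpha^j = \alpha^{i+j}$ holds on the nose from the definition of the integer powers of $\alpha$ (concatenate $\alpha$'s if $i,j\geq 0$, concatenate $\alpha^*$'s if $i,j\leq 0$; $\alpha^0 = 1$), so one takes $p_{i,j} := 1$. In the remaining case $i$ and $j$ have opposite signs, and by the symmetry exchanging $\alpha$ with $\alpha^*$ (equivalently, replacing $(i,j)$ by $(-i,-j)$) it suffices to treat $i > 0 > j$. Setting $m := -j > 0$ we have $\alpha^i\alpha^j = \alpha^i(\alpha^*)^m$, and since $\alpha$ commutes with $\alpha^*$ we may pair off $\min(i,m)$ factors via the contraction identity. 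If $i \geq m$ this yields $\alpha^i(\alpha^*)^m = \alpha^{i-m}(1-\gamma\gamma^*)^m$ with $i - m = i+j \geq 0$; if $i < m$ it yields $\alpha^i(\alpha^*)^m = (1-\gamma\gamma^*)^i(\alpha^*)^{m-i}$ with $(\alpha^*)^{m-i} = \alpha^{i+j}$. Using centrality of $1 - \gamma\gamma^*$ to move the polynomial factor to the right, both subcases give $\alpha^i\alpha^j = \alpha^{i+j}(1-\gamma\gamma^*)^{\min(i,m)}$, so $p_{i,j}(X,Y) := (1-XY)^{\min(i,-j)}$ works; the case $i < 0 < j$ is identical, using $\alpha^*\alpha = 1 - \gamma^*\gamma = 1 - \gamma\gamma^*$.

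I do not expect a genuine obstacle here. The only points demanding care are the sign conventions for $\alpha^i$ with $i < 0$ combined with $x^0 = 1$, and remembering to invoke centrality of $1-\gamma\gamma^*$ so that the polynomial factor ends up on the correct side of $\alpha^{i+j}$, as the statement demands. The author may well organise the same computation differently --- say, by first treating $|j| = 1$ and then inducting on $|j|$ --- but in any presentation the whole point is the single relation $\alpha\alpha^* = 1 - \gamma\gamma^*$ together with the centrality of its right-hand side.
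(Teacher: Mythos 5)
Your proposal is correct and follows essentially the same route as the paper: split on the signs of $i$ and $j$, contract paired factors via $\alpha\alpha^*=\alpha^*\alpha=1-\gamma\gamma^*$, and use centrality of $\gamma\gamma^*$ to place the factor $(1-\gamma\gamma^*)^{\min(i,-j)}$ to the right of $\alpha^{i+j}$. The paper's two subcases $i>|j|$ and $i\leq -j$ correspond exactly to your $\min(i,-j)$ formulation.
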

\begin{proof}[Proof of Claim \ref{claim1}] When $i$ and $j$ have the same sign this is clear --- the constant polynomial 1 does the job. If $i\geq 0$, $j<0$
and $i>|j|$ then
\[
\alpha^i \alpha^j=\underbrace{\alpha\cdots\alpha}_{\text{$i$}}\underbrace{\alpha^*\cdots \alpha^*}_{\text{$-j$}}=\alpha^{i+j} (\alpha\alpha^*)^{-j}=\alpha^{i+j}(1-\gamma\gamma^*)^{-j},
\]
and, similarly, if $i\leq -j$ we get 
\[
\alpha^i \alpha^j=\underbrace{\alpha\cdots\alpha}_{\text{$i$}}\underbrace{\alpha^*\cdots \alpha^*}_{\text{$-j$}}=\alpha^{*(-i-j)} (\alpha\alpha^*)^{i}=\alpha^{i+j}(1-\gamma\gamma^*)^{i}.
\]
The remaining case ($i<0$ and $j\geq 0$) follows by symmetry.
\end{proof}

\begin{claim}\label{claim2}
The element $\alpha^i$ is not a left zero-divisor for any $i\in \ZZ$.
\end{claim}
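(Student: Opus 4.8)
The plan is to deduce Claim \ref{claim2} directly from Claim \ref{claim1} together with the basis property of $\B$. The key structural input is that the unital subalgebra $\Alg(\gamma,\gamma^*)\subset \Pol(SU_{-1}(2))$ is a polynomial ring in two commuting variables, hence a domain: since $\gamma\gamma^*=\gamma^*\gamma$ this subalgebra is spanned by $\{\gamma^j\gamma^{*k}\mid j,k\in\NN_0\}$, and these elements are linearly independent because they constitute the $i=0$ part of the basis $\B$; thus the obvious surjection $\CC[X,Y]\to\Alg(\gamma,\gamma^*)$, $X\mapsto\gamma$, $Y\mapsto\gamma^*$, is a bijection.

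Next I would proceed as follows. Let $i\in\ZZ$ and suppose $\alpha^i x=0$ for some $x\in\Pol(SU_{-1}(2))$. Expanding $x$ in the basis $\B$ and grouping the terms according to the power of $\alpha$ gives a unique expression $x=\sum_{s\in S}\alpha^s g_s$ with $S\subset\ZZ$ finite and each $g_s\in\Alg(\gamma,\gamma^*)\setminus\{0\}$. Applying Claim \ref{claim1} to each product $\alpha^i\alpha^s$ yields $\alpha^i\alpha^s=\alpha^{i+s}p_{i,s}(\gamma,\gamma^*)$; inspecting the proof of Claim \ref{claim1}, $p_{i,s}(\gamma,\gamma^*)=(1-\gamma\gamma^*)^{m_{i,s}}$ for some $m_{i,s}\in\NN_0$, which is a nonzero element of the domain $\Alg(\gamma,\gamma^*)$. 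Hence
\[
0=\alpha^i x=\sum_{s\in S}\alpha^{i+s}\,p_{i,s}(\gamma,\gamma^*)\,g_s ,
\]
and since the exponents $i+s$ for $s\in S$ are pairwise distinct and each $p_{i,s}(\gamma,\gamma^*)g_s$ is a linear combination of the $\gamma^j\gamma^{*k}$, the right-hand side is a linear combination of distinct basis vectors from $\B$. Linear independence of $\B$ forces $p_{i,s}(\gamma,\gamma^*)g_s=0$ for every $s\in S$, and as $\Alg(\gamma,\gamma^*)$ is a domain with $p_{i,s}(\gamma,\gamma^*)\neq 0$ this gives $g_s=0$ for all $s$, i.e.\ $x=0$.

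I do not expect a serious obstacle here; the argument is essentially bookkeeping. The only points requiring a little care are that distinct powers $\alpha^s$ occurring in $x$ remain in distinct ``$\alpha$-degrees'' after left multiplication by $\alpha^i$ — which is immediate since $s\mapsto i+s$ is injective — and that the correction factors $p_{i,s}$ supplied by Claim \ref{claim1} are never zero-divisors. Both of these reduce to the preliminary observation that $\Alg(\gamma,\gamma^*)$ is the polynomial ring $\CC[X,Y]$, so I would make sure to state that fact cleanly before beginning the main computation. (Should one wish to avoid invoking ``domain'' altogether, the vanishing $p_{i,s}(\gamma,\gamma^*)g_s=0\Rightarrow g_s=0$ can alternatively be checked by hand, comparing coefficients of $\gamma^J\gamma^{*K}$ and using minimality of $\min(J,K)$ among the monomials appearing in $g_s$.)
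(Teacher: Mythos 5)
Your proof is correct, and it is organized genuinely differently from the paper's. The paper first exploits normality of $\alpha$ to reduce the whole claim to a single element: since $\alpha^*\alpha=\alpha\alpha^*$, multiplying $\alpha^i x=0$ on the left by the appropriate power of $\alpha^*$ or $\alpha$ gives $(\alpha\alpha^*)^{|i|}x=0$, so it suffices to show that $\alpha\alpha^*=1-\gamma\gamma^*$ is not a left zero-divisor; this is then settled by rewriting $\alpha\alpha^*x=0$ as $x=\gamma\gamma^*x$, expanding $x$ in the basis $\B$, and comparing $\deg_{\gamma,\gamma^*}$ of the two sides (using centrality of $\gamma\gamma^*$). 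You instead keep $\alpha^i$ as it is, split $x$ into its $\alpha$-graded components $g_s\in\Alg(\gamma,\gamma^*)$, use Claim \ref{claim1} to move $\alpha^i$ past each component at the cost of a factor $(1-\gamma\gamma^*)^{m_{i,s}}$, and then combine linear independence of $\B$ with the identification $\Alg(\gamma,\gamma^*)\cong\CC[X,Y]$ to conclude $g_s=0$ componentwise. Both arguments ultimately rest on the same two inputs, namely the basis $\B$ and the relation $\alpha\alpha^*=1-\gamma\gamma^*$; what yours buys is that it reuses Claim \ref{claim1} and confines the zero-divisor question to the commutative subalgebra generated by $\gamma,\gamma^*$, where it is immediate, while the paper's normality trick is shorter because it handles all $i$ at once and finishes with a single degree comparison. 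If you write your version up, do record cleanly the preliminary fact you flag yourself: $\{\gamma^j\gamma^{*k}\mid j,k\in\NN_0\}$ spans $\Alg(\gamma,\gamma^*)$ because $\gamma$ is normal, and is linearly independent as the $\alpha$-degree-zero part of $\B$, so that $p\mapsto p(\gamma,\gamma^*)$ is an isomorphism $\CC[X,Y]\to\Alg(\gamma,\gamma^*)$ and in particular $(1-\gamma\gamma^*)^{m}\neq 0$.
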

\begin{proof}[Proof of Claim \ref{claim2}]
Since $\alpha^*\alpha=\alpha\alpha^*$, it suffices to prove that $\alpha\alpha^*$ (and hence none of its powers) is a left zero-divisor. To this end, assume that $x\in\Pol(SU_{-1}(2))$ satisfies  $\alpha\alpha^* x=0$. Since $\alpha\alpha^*=1-\gamma\gamma^*$, this means $x=\gamma\gamma^*x$ and by expanding $x$ as $x=\sum_{i,j,k}\lambda_{ijk}\alpha^i\gamma^j\gamma^{*k}$  and using that $\gamma\gamma^*$ is central this translates into
\[
\sum_{i,j,k}\lambda_{ijk}\alpha^i\gamma^j\gamma^{*k}=\sum_{i,j,k}\lambda_{ijk}\alpha^i\gamma^{j+1}\gamma^{*(k+1)}.
\]
If these terms were non-zero, we could apply $\deg_{\gamma, \gamma^*}(-)$ on both sides to obtain a contradiction.  Hence $x=0$. 
\end{proof}

We now turn to the actual proof of the fact that $\Pol(SU_{-1}(2))$ is a domain. Let $x=\sum_{ijk}\lambda_{ijk} \alpha^i \gamma^j\gamma^{*k}$ and $y=\sum_{lmn}\mu_{lmn}\alpha^l\gamma^m\gamma^{*n} $ in $\Pol(SU_{-1}(2))$ be non-zero elements and denote their $\alpha$-degrees by $i_0$ and $l_0$, respectively. Assuming that $xy=0$, we have
\[
0=\sum_{\substack{i,j,k\\l,m,n}}\lambda_{ijk} \mu_{lmn} \alpha^i \gamma^j\gamma^{*k}\alpha^l\gamma^m\gamma^{*n}
\]
and hence 
\begin{align}
  \label{degree-eq}
  \sum_{\substack{i \leq i_0, l \leq l_0 \\ i + l < i_0 + l_0 \\ j,k,m,n}}
  \lambda_{ijk} \mu_{lmn} \alpha^i \gamma^j\gamma^{*k}\alpha^l\gamma^m\gamma^{*n} 
  & =
  -\sum_{\substack{j,k\\ m,n}}
  \lambda_{i_0jk}\mu_{l_0mn} \alpha^{i_0} \gamma^j\gamma^{*k}\alpha^{l_0}\gamma^m\gamma^{*n} \notag \\
  & = -\sum_{\substack{j,k\\ m,n}}
  \lambda_{i_0jk} \mu_{l_0mn}(-1)^{(j+k)|l_0|} \alpha^{i_0}\alpha^{l_0} \gamma^j\gamma^{*k}\gamma^m\gamma^{*n}\notag \\
  & =
  -\sum_{\substack{j,k\\ m,n}}
  \lambda_{i_0jk} \mu_{l_0mn}(-1)^{(j+k)|l_0|} \alpha^{i_0}\alpha^{l_0} \gamma^{j+m}\gamma^{*(k+n)}
  \text{.}
\end{align}
Applying Claim \ref{claim1}, we see that  the last term has the form $\alpha^{i_0+l_0}p(\gamma,\gamma^*)$ for some $p\in \CC[X,Y]$ and hence its $\alpha$-degree is $i_0+l_0$ if $p(\gamma^*,\gamma)\neq 0$. Similarly, we get that 
\[
\deg_\alpha\left(\sum_{\substack{i<i_0,l<l_0\\ j,k,m,n}}\lambda_{ijk} \mu_{lmn} \alpha^i \gamma^j\gamma^{*k}\alpha^l\gamma^m\gamma^{*n}   \right)<i_0+l_0
\]
and hence the equality \eqref{degree-eq} can only happen if $p(\gamma,\gamma^*)=0$. We therefore have
\begin{align*}
0&=\sum_{\substack{j,k\\ m,n}}\mu_{l_0mn}\lambda_{i_0jk}(-1)^{(j+k)|l_0|} \alpha^{i_0}\alpha^{l_0} \gamma^{j+m}\gamma^{*(k+n)}\\
&= \alpha^{i_0}\alpha^{l_0}\left(\sum_{\substack{j,k\\ m,n}}\mu_{l_0mn}\lambda_{i_0jk}(-1)^{(j+k)|l_0|}\gamma^{j+m}\gamma^{*(k+n)}\right)
\end{align*}
and, by Claim \ref{claim2}, this implies that 
\begin{align*}
0&=\sum_{\substack{j,k\\ m,n}}\mu_{l_0mn}\lambda_{i_0jk}(-1)^{(j+k)|l_0|}  \gamma^{j+m}\gamma^{*(k+n)} \\
&=\left(\sum_{j,k} \lambda_{i_0,j,k} (-1)^{(j+k)|l_0|}\gamma^j\gamma^{*k}  \right)\left( \sum_{m,n} \mu_{l_0mn}  \gamma^m\gamma^{*n}\right)
\end{align*}
However, since $\B$ is a linear basis for $\Pol(SU_{-1}(2))$, the map $\CC[X,Y]\ni p\mapsto p(\gamma,\gamma^*)\in \Pol(SU_{-1}(2))$ is injective, and since $\CC[X,Y]$ is a domain one of the factors in the last product needs to be zero, which contradicts the fact that $i_0$ and $l_0$ are chosen such that there exist $j,k,l,m\in \NN_0$ with $\lambda_{i_0jk}\neq 0$ and $\mu_{l_0,n,m}\neq 0$.

\end{proof}


\def\cprime{$'$} \def\cprime{$'$}

\end{document}